\newcommand{\qed}{\hfill $\square$ \bigskip}
\newtheorem{theorem}{Theorem}[section]
\newtheorem{lemma}[theorem]{Lemma}
\newtheorem{conjecture-conclude}{Conjecture}
\newtheorem{problem-conclude}[conjecture-conclude]{Problem}
\newcommand\bonusspiral{} % just for safety
\def\bonusspiral[#1](#2)(#3:#4)(#5:#6)[#7]{% \bonusspiral[draw options](placement)(start angle:end angle)(start radius:final radius)[revolutions]
	\pgfmathsetmacro{\domain}{#4+#7*360}
	\pgfmathsetmacro{\growth}{180*(#6-#5)/(pi*(\domain-#3))}
	\draw [#1,
	shift={(#2)},
	domain=#3*pi/180:\domain*pi/180,
	variable=\t,
	smooth,
	samples=int(\domain/5)] plot ({\t r}: {#5+\growth*\t-\growth*#3*pi/180})
}
\begin{document}

\title{$S$-packing colorings of distance graphs with distance sets of cardinality $2$}

\author{
Bo\v{s}tjan Bre\v{s}ar$^{a,b}$  \and Jasmina Ferme$^{c,a}$ \and P\v{r}emysl Holub$^{d}$ \and Marko Jakovac$^{a,b}$ \and Petra Melicharov\' a$^{d}$
}

\date{\today}

\maketitle

\begin{center}
$^a$ Faculty of Natural Sciences and Mathematics, University of Maribor, Slovenia\\
\medskip

$^b$ Institute of Mathematics, Physics and Mechanics, Ljubljana, Slovenia\\
\medskip

$^c$ Faculty of Education, University of Maribor, Slovenia\\
\medskip

$^d$ University of West Bohemia, Plze\v n, Czech Republic
\\
%\medskip

%$^e$
\end{center}

\begin{abstract}
For a non-decreasing sequence $S=(s_1,s_2,\ldots)$ of positive integers, a partition of the vertex set of a graph $G$ into subsets $X_1,\ldots, X_\ell$, such that vertices in $X_i$ are pairwise at distance greater than $s_i$ for every $i\in\{1,\ldots,\ell\}$, is called an $S$-packing $\ell$-coloring of $G$. The minimum $\ell$ for which $G$ admits an $S$-packing $\ell$-coloring is called the $S$-packing chromatic number of $G$, denoted by $\chi_S(G)$. In this paper, we consider $S$-packing colorings of distance graphs $G(\mathbb{Z},\{k,t\})$, where $k$ and $t$ are positive integers, which are the graphs whose vertex set is $\mathbb{Z}$, and two vertices $x,y\in \mathbb{Z}$ are adjacent whenever $|x-y|\in\{k,t\}$. We complement partial results from two earlier papers, thus determining all values of $\chi_S(G(\mathbb{Z},\{k,t\}))$ when $S$ is any sequence with $s_i\le 2$ for all $i$. In particular, if $S=(1,1,2,2,\ldots)$, then the $S$-packing chromatic number is $2$ if $k+t$ is even, and $4$ otherwise, while if $S=(1,2,2,\ldots)$, then the $S$-packing chromatic number is $5$, unless $\{k,t\}=\{2,3\}$ when it is $6$; when $S=(2,2,2,\ldots)$, the corresponding formula is more complex. 
\end{abstract}

\noindent {\bf Key words:} $S$-packing coloring, $S$-packing chromatic number, distance graph, distance coloring.

\medskip\noindent
{\bf AMS Subj.\ Class: 05C15, 05C12}

\section{Introduction}
\label{sec:intro}

Given a graph $G$ and a non-decreasing sequence $S=(s_1,s_2,\ldots)$ of positive integers, the mapping $f:V(G) \rightarrow [\ell]=\{1,\ldots,\ell\}$ is an {\em $S$-packing $\ell$-coloring} of $G$ if for any distinct vertices $u,v\in V(G)$ with $f(u)=f(v)=i$, $i\in\{1,\ldots,\ell\}$, the distance between $u$ and $v$ in $G$ is greater than $s_i$.
The smallest $\ell$ such that $G$ has an $S$-packing $\ell$-coloring is the {\em $S$-packing chromatic number} of $G$, denoted by $\chi_S(G)$. This concept was introduced by Goddard, Hedetniemi, Hedetniemi, Harris, and Rall~\cite{goddard-2008}, and was studied in a number of papers; see the recent survey~\cite{BFKR} and the references therein. The main focus of the seminal paper and a number of subsequent papers was on the specific sequence $S=(n)_{n\ge 1}$ in which positive integers appear in the natural order, where the resulting graph invariant is simply called the packing chromatic number~\cite{BKR-07}. Goddard and Xu~\cite{goddard-2012} started consideration of various non-decreasing sequences $S$, and a number of authors followed them. Arguably the most interesting sequences are those that involve only integers $1$ and $2$, since they are in a sense between standard coloring, where $S$ is the constant sequence of $1$s, and $2$-distance coloring, where $S$ is the constant sequence of $2$s (note that a $2$-distance coloring is equivalent to a coloring of the square of a graph and it has been intensively studied in the last decades~\cite{kramer}). In particular, $S$-packing colorings of subcubic graphs were investigated with respect to such sequences $S$~\cite{bkrw-2017b,gt-2016,liu-2020}. Roughly a decade ago, Ekstein et al.~\cite{ekstein-2012} and Togni~\cite{togni-2014} initiated the study of $S$-packing colorings in integer distance graphs, which we present next. Their study was motivated by a series of papers on proper vertex coloring of distance graphs, see e.g. \cite{CCH-1997,DS-2013,DZ-1997,EES-1985} and references therein.

Given a set $D = \{ d_1,\dots, d_h \} $, $h \geq 1$, of positive integers, the {\em (integer) distance graph}, $ G(\mathbb{Z}, D)$, 
is the infinite graph with $\mathbb{Z}$ as the vertex set, while vertices $x$ and $y$ are adjacent if $ |x-y| \in D$. That is, two vertices/integers are adjacent in the graph if their distance in $\mathbb{Z}$ is one of the integers in $\{d_1,\ldots, d_h\}$. We will simplify the notation, and instead of $G(\mathbb{Z}, D)$ write $G(D)$, and for distance sets with two integers we will write $D=\{k,t\}$, and always assume that $k<t$; thus the corresponding distance graph will be written as $G(k,t)$. The packing chromatic numbers of distance graphs $G(k,t)$ were investigated in~\cite{ekstein-2014}. In addition, the $S$-packing colorings of distance graphs $G(k, t$) where $k\in\{1, 2\}$ and $t$ is arbitrary were studied in~\cite{hh-2023,bfk-2021}.
Concerning the sequences $S$ which involve only integers that are not greater than $2$, exact values for $\chi_S(G(1,t))$, where $t\ge 2$, were determined in~\cite{hh-2023, bbs-2019}, while the values $\chi_S(G(2,t))$, where $t\ge 3$, were established in~\cite{bfk-2021}. Hence, for this type of sequences $S$, the $S$-packing chromatic numbers of $G(k,t)$ were left open when $k\ge 3$, and the main goal of this paper is to establish these remaining values. 

In the next section, we establish the notation and give some preliminary observations. In particular, we present two main tools that are used in the proofs. First, we present a representation of the graph $G(k,t)$ in the so-called shifted grid. Second, we introduce color patterns and shift sequences, which enable us a relatively brief presentation of colorings. In Section~\ref{sec:main}, we follow with proving the main results, which are the values of $\chi_S(G(k,t))$, for all $3\le k<t$, and for all possible sequences $S$ involving only integers $1$ and $2$. In Section~\ref{sec:conclude}, we give an overview of results on $S$-packing colorings of the graph $G(k,t)$ by combining the results from~\cite{hh-2023,bfk-2021} with new results from this paper.

\section{Notation and preliminaries}

When presenting sequences $S$, we will often use $i^p$, where $i$ and $p$ are positive integers, as a shortened notation of %and this stands for 
the (sub)sequence $(i,\ldots, i)$, where $i$ appears $p$ times. For instance, $(1^2,2^5)$ stands for the (sub)sequence $(1,1,2,2,2,2,2)$. We may also write $i^\infty$ which coincides with the infinite (sub)sequence $(i,i,i, ...)$. In the case of distinct integers in the sequence $S$, the integer with the power to infinity is the largest among the integers in $S$. For instance, $(1^2,2^\infty)$ presents the sequence with two integers $1$ and all other integers $2$. 

 Note that $ G(k,t) $ is connected if and only if $ \gcd(k,t) = 1 $.
Thus, when determining the $ S $-packing chromatic numbers, we restrict to graphs $G(k,t)$ such that $k$ and $t$ are coprime integers. Note that if $ \gcd (k,t) = g $, then $G(k,t)$ consists of connected components all of which are graphs $G( \frac{k}{g}, \frac{t}{g})$, implying that $ \chi_S(G(k,t)) = \chi_S(G( \frac{k}{g}, \frac{t}{g})) $.

During our study we make use of the following presentation. Notably, a connected distance graph $G(k,t)$ can be represented by the square lattice $\{0,1,\dots,t\} \times \mathbb{Z} $ with vertices given by {\em points}, which are ordered pairs $ (i,j), i \in \{0,1,\dots,t\}, j \in \mathbb{Z} $, such that vertex/point $ (i,j) $ of the grid is a representative of the integer $j \cdot t + i \cdot k $ from $ G(k,t) $. These representatives are unique with the exception of the points whose first coordinate equals to $ 0 $ or $ t $, since a point $(0,j)$ on the grid represents the same integer of $G(k,t)$ as the point $(t,j')$, where $ j'=j-k $.  See Fig.~\ref{fig:obr15}, where ordered pairs in red present points on the grid, while integers in black present the corresponding integers from $\mathbb{Z}$. 

Furthermore, let {\em column $i$} denote the set of vertices $B_i = \{(i,j):\, j\in \mathbb{Z}\} $, where $0\le i\le t$.
As mentioned earlier, integers from $V(G(k,t))$ of the form $jt, j\in \mathbb{Z}$, are represented twice on the grid, notably by a vertex in the column $0$ and a vertex in column $t$. That is, $jt$ is represented by vertex $(0,j)$ as well as vertex $(t,j-k)$.  
For instance, %In particular, 
points $(0,0)$ and $(t,-k)$ represent $0\in V(G(k,t))$. 

\begin{figure}[ht]
	\centering
	\includegraphics[width=10cm]{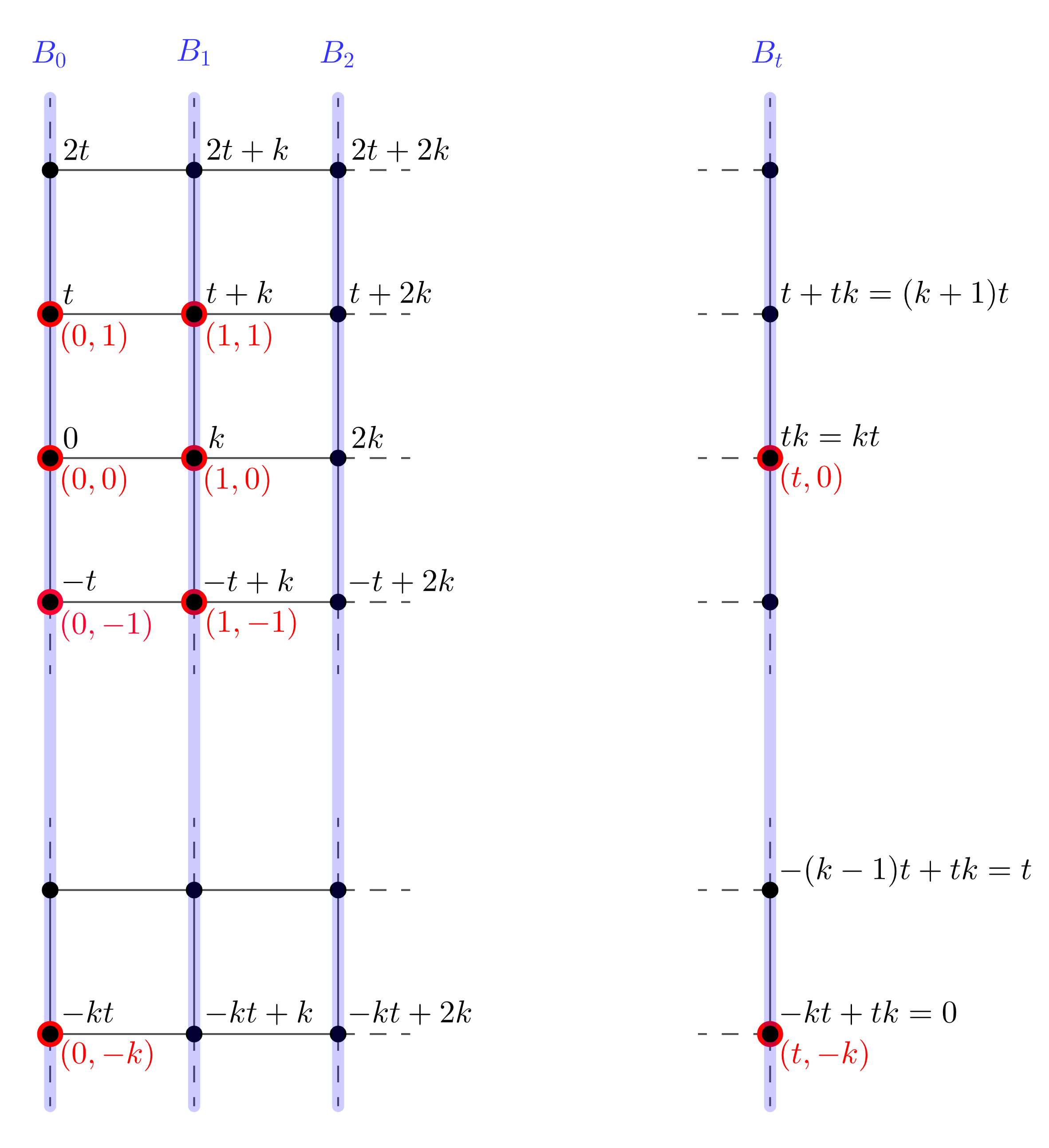}
	\caption{Representation of the distance graph $ G(k,t) $ in the square grid.}
	\label{fig:obr15}
\end{figure}

\subsection{Color patterns}

%In the proofs we will often present an $ S $-packing coloring $ c $ of the graph $ G(k,t) $ given by periodic patterns of length $ d $ applied to the columns $ B_i $.

In this paper, we will often present an $ S $-packing coloring $ c $ by using periodic patterns applied on columns $ B_i $, where $ i \in  \{ 0,\dots,t \}$.
A periodic pattern of length $ d \geq 2 $ is a sequence of colors $ [c_1,\dots,c_d] $ (denoted by square brackets), where the colors $ c_n, n \in [d]$, are not necessarily pairwise distinct.
These colors are given to consecutive vertices within one column and the pattern is applied downwards.
%ONE MAIN PATTERN THAT IS SHIFTED ACCORDING TO SHIFT SEQUENCE AND THEN COPY THE PATTERN TO THE REST OF THE COLUMN
That is, if the coloring $ c$ is using a pattern $ P = [c_1,\dots,c_d] $ in the column $ B_x $ such that $ c(x,y) = c_1 $ for some $ y $, then $ c(x,y-n)=c_{n+1} $ for each $ n \in [d-1] $.
%then $ c(x,y-1) = c_2 $, $ c(x,y-2) = c_3 $ and so on  $ c(x,y-d+1) = c_d $.
This pattern is then periodically copied upwards and downwards to cover all the vertices of $ B_x $. Thus, $ c(x,y+1) = c_d $, $ c(x,y+2) = c_{d-1} $ and so on.

In order for $ c $ to be an $ S $-packing coloring of $G(k,t)$, patterns must be often shifted in consecutive columns.
We define the notion of {\em shift sequence} $ (p_i)_{i=0}^{t-1} $ where $ p_i \in \mathbb{N}_0$. To describe it, we also need the concept of {\em reference point} $(i,j)$ in $B_i$, which is a unique point in each column.
Without loss of generality we may declare the reference point in column $B_0$ to be $(0,0)$.
The integer $ p_i \geq 0 $ represents the value by which the reference point in $ B_{i+1} $ shifts downwards with respect to the reference point in $ B_i $.
That is, if the reference point in $ B_i $ is $( i,j) $, for some $ j\in\mathbb{Z} $, then $ (i+1,j-p_i )$ is the reference point in column $B_{i+1}$. 
(Note that $ p_i=0 $ means that there is no shift.)
To make the elements of the shift sequence correspond to the application of periodic patterns to the columns $ B_i $, we will always assume that the reference point of each column $ B_i $ always receives the first color of the corresponding pattern applied to this column.
That is, if a coloring $ c $ is using the pattern $ P=[c_1,\ldots,c_d] $ in a column $ B_x $ with the reference point $ (x,j) $ then $ c(x,j)=c_1 $.
Thus, the pattern $ P $ used together with the reference point $ (x,j) $ completely determines the coloring of points in $ B_x $.

When shifting the patterns, there are two cases to consider: either columns $ B_i $ and $ B_{i+1} $ are using the same pattern or they are using different patterns. 
First, suppose that a coloring $c$ uses the same pattern $[c_1, \ldots, c_d]$ in columns $B_i$ and $B_{i+1}$. In this case the integer $p_i$ defines the value by which the color $c_1$ in $B_{i+1}$ is shifted with respect to the color $c_1$ in $B_i$. In other words, if $c(i,j)=c_1$ for some $j \in \mathbb{Z}$, then $c(i+1, j-p_i)=c_1$. This implies that $ p_i \neq 0 $ (the pattern must be shifted) since the adjacent vertices must not receive the same color.
Next, let $c$ assign a pattern $[a_1, \ldots, a_{d_1}]$ to the column $B_i$ and a pattern $[b_1, \ldots, b_{d_2}]$ to the column $B_{i+1}$.
In this case, the integer $p_i$ defines the value by which the color $b_1$ given to the reference point of $B_{i+1}$ is shifted with respect to the color $a_1$ given to the reference point of $B_i$. This means that if $c(i,j)=a_1$, where $(i,j)$ is the 
reference point in $B_i$, then $c(i+1, j-p_i)=b_1$ (note that $(i+1, j-p_i)$ is the reference point in $B_{i+1}$). 
%If $ d_1 \neq d_2 $ (the patterns are of different length), the use of reference points in unavoidable.

Since the points $ (0,j) $ represent the same vertices as points $ (t,j-k) $, the relation $ c(0,j) = c(t,j-k) $ must hold for each $ j \in \mathbb{Z} $.
Thus, if $ c $ is an $ S $-packing coloring of $ G(k,t) $, then columns $ B_0 $ and $ B_t $ must use the same periodic pattern and $ \sum_{i=0}^{t-1} p_i \equiv k \pmod d $, where $ d $ is the length of the pattern used on $ B_0 $.
Note that it is sufficient to consider only shifts $ p_i < d_{\max} $ where $ d_{\max} $ denotes the length of the longest pattern.

In this paper, all colorings given by periodic patterns and shift sequences will either use one or two different patterns.
If the second option occurs, it is necessary to specify which column receives which pattern.
Let $ c $ be an $ S $-packing coloring of $ G(k,t) $ given by the shift  sequence $ (p_i)_{i=0}^{t-1}  $ and two periodic patterns $ P_1 = [a_1,\dots,a_{d_1}] $ and $ P_2 = [b_1,\dots,b_{d_2}] $ such that pattern $ P_2 $ is used for example on columns $ B_1 $  and $ B_3 $, and the rest of the columns obtain pattern $ P_1 $.
We will abbreviate such description as:
$$
[a_1,\dots,a_{d_1}]_{p_0} [b_1,\dots,b_{d_2}]_{p_1} [a_1,\dots,a_{d_1}]_{p_2} [b_1,\dots,b_{d_2}]_{p_3} [a_1,\dots,a_{d_1}]_{p_{4\rightarrow t-1}} [a_1,\dots,a_{d_1}],
$$
provided that the shift sequence has $p_4=\ldots =p_{t-1}$.

For a better understanding of the concepts, we next provide a more specific example. Let $ c $ be an $ (1^6) $-coloring of $ G(k,t) $ given by $ P_1 = [1,2,3] $, $ P_2 = [4,5,6] $ and the shift sequence $ (0,1,1,0,1^{t-4}) $ where the notation $ 1^{t-4} $ stands for the (sub)sequence $(1,\dots, 1)$, where $ 1 $ appears $ (t-4) $ times. Again, the pattern $ P_2 $ is applied in columns $ B_1 $ and $ B_3 $, while the rest of the columns use the pattern $ P_1 $.
Thus, the $ (1^6) $-coloring $ c $ is given by:
$$
[1,2,3]_{p_0=0} [4,5,6]_{p_1=1} [1,2,3]_{p_2=1} [4,5,6]_{p_3=0} [1,2,3]_{p_{4 \rightarrow t-1}=1} [1,2,3].
$$
Figure~\ref{fig2} demonstrates how the reference point (marked in red) is shifted with respect to the given shift sequence and how the patterns are applied.

\begin{figure}[ht]
	\centering
	\includegraphics[width=10cm]{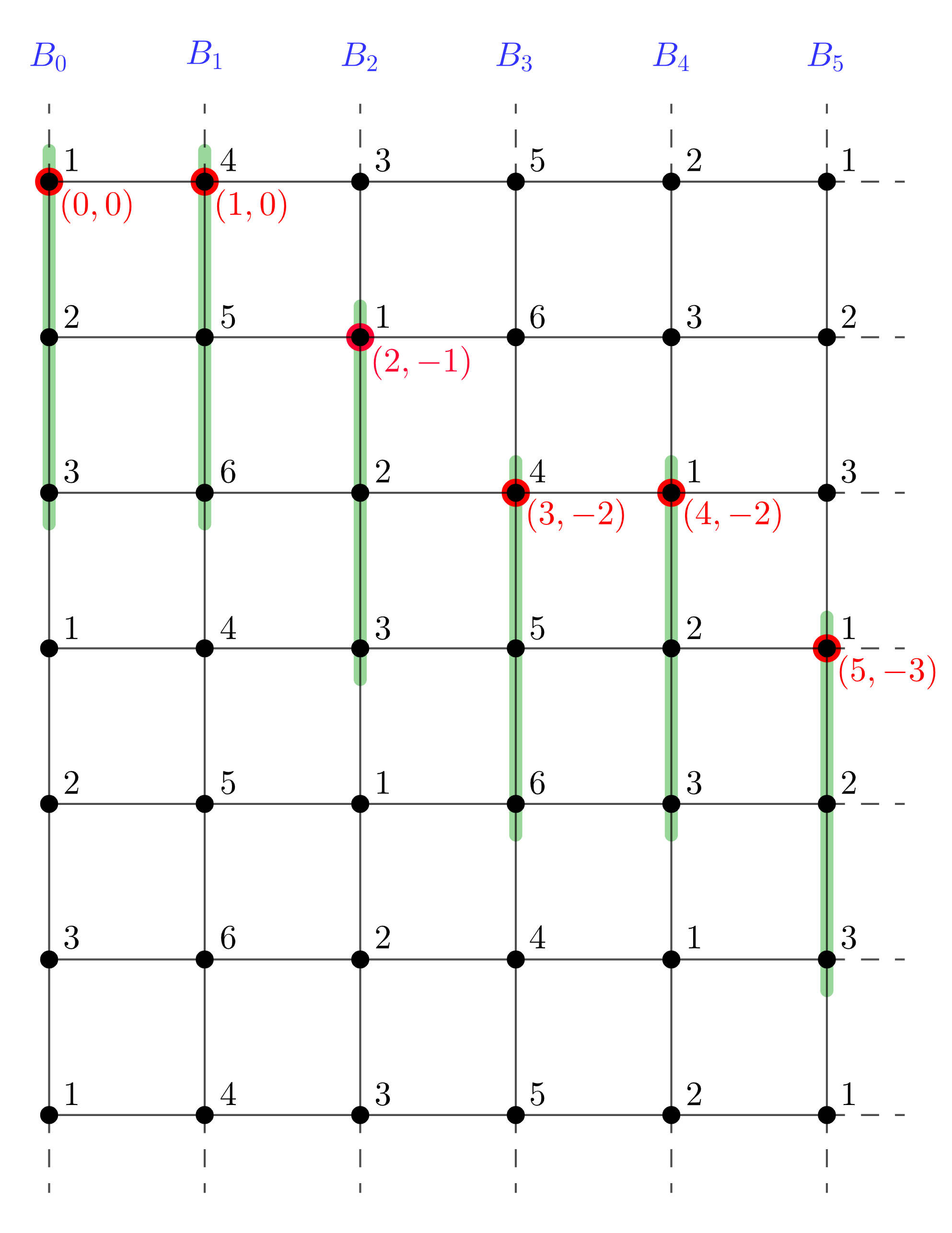}
	\caption{$ (1^6)$-coloring of $ G(k,t) $ given by patterns $ [1,2,3] $ and $ [4,5,6] $, and the shift sequence $ (0,1,1,0,1^{t-4}). $}
	\label{fig2}
\end{figure} 

To verify that $c$ is an $S$-packing coloring of $ G(k,t) $ we will use equivalent conditions, which can be derived from the above notation. Notably, $c$ is an $S$-packing coloring of $ G(k,t)$  if and only if every two vertices with the same color are at sufficient distance, columns $ B_0 $ and $ B_t $ obtain the same pattern and $ \sum_{i=0}^{t-1} p_i \equiv k \ (\mathrm{mod} \ d) $, where $d$ is the length of the pattern in $B_0$. The first among these three conditions requires the following verification. Note that for every two points $(x,y)$ and $(u,v)$ their distance in $G(k,t)$ equals $\min\{|x-u|+|y-v|,x+(t-u)+|(y-k)-v|\}$. Hence, if $c(x,y)=i=c(u,v)$, then
$$\min\{|x-u|+|y-v|,x+(t-u)+|(y-k)-v|\}>s_i.$$
In particular, if $s_i=1$, then it is sufficient to verify that two vertices with color $i$ are not adjacent in the grid. Similarly, if $s_i=2$, then two vertices with color $i$ must not be adjacent in the grid, must not have a common neighbor in the grid, and $c(1,j)=i$ implies $c(t-1,j-k)\ne i$  and vice versa.

\section{$ S $-packing colorings of graphs $ G(k,t) $}
\label{sec:main}

In this section, we present values of $\chi_S(G(k,t))$, where $3\le k<t$ are positive integers, for all possible infinite sequences $S$ whose elements are in $\{1,2\}$. 

For completeness of our study of $S$-packing colorings of graphs $G(k,t)$, we first recall the known results about the standard chromatic number. That is, we consider the sequence $S=(1^\infty)$. Concerning the chromatic number of distance graphs $G(D)$, Walther proved the general bound $\chi(G(D))\le |D|+1$; see~\cite{bbs-2019,wal-1990}. Since $|D|=2$ in our case, we infer that $\chi(G(k,t))\in \{2,3\}$ depending on whether $G(k,t)$ is bipartite or not. Notably, for $3\le k<t$, we derive
$$ \chi(G(k,t))= \left\{
	\begin{array}{ll}
		2; \ k+t \textit{ even,}\\
		3; \ k+t \textit{ odd. }
	\end{array}\right. $$
	
Therefore, whenever a sequence $S$ contains three $1$s, the above result may be applied. In the next subsection, we deal with the three remaining subcases of sequences $S$ depending on the number of $1$s.

\subsection{$ S = (1,1,2,2,2\dots) $}
\label{ss:111}

\begin{theorem}
	If $ G(k,t) $ is the distance graph, where $ k,t $ are coprime positive integers such that $ 3 \leq k < t $, and $ S = (1,1,2^\infty) $, then
	$$ \chi_S(G(k,t))= \left\{
	\begin{array}{ll}
		2; \ k+t \textit{ even},\\
		4; \ k+t \textit{ odd}.
	\end{array}\right. $$
\end{theorem}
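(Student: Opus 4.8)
The plan is to split on the parity of $k+t$, working throughout in the grid/cylinder representation (columns $B_0,\dots,B_{t-1}$ with the wrap identification $(t-1,j)\sim(0,j+k)$) and using the recalled fact that $G(k,t)$ is bipartite exactly when $k+t$ is even. The easy half is the even case: if $k+t$ is even then $G(k,t)$ is bipartite, hence has a proper $2$-coloring, and since $s_1=s_2=1$ a proper $2$-coloring is precisely a $(1,1)$-packing coloring, so $\chi_S(G(k,t))\le 2$, with equality because the graph has edges. For the rest assume $k+t$ is odd. I would first record the structural reformulation: in any $S$-packing coloring the classes of colors $1,2$ are independent sets while every class of a color $\ge 3$ is a $2$-packing, so an $S$-packing $\ell$-coloring exists if and only if one can delete a union of $\ell-2$ two-packings from $G(k,t)$ and leave a bipartite graph. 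Non-bipartiteness already gives $\chi_S\ge 3$, and the task reduces to proving $\chi_S\le 4$ and $\chi_S\ge 4$.

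For the upper bound I would exhibit an explicit $4$-coloring. Color $V\setminus U$ by the checkerboard $c(i,j)=1+((i+j)\bmod 2)$ with colors $1,2$; this is proper on every grid edge except the wrap edges, which (because $k+t$ is odd) are monochromatic and form a perfect matching $\{(t-1,j),(0,j+k)\}$, $j\in\mathbb Z$, between $B_{t-1}$ and $B_0$. It then suffices to pick a transversal $U$ of this matching that splits into two $2$-packings, to be colored $3$ and $4$. I would take
$$U=\{(t-1,j):j\equiv 0,1\pmod 4\}\cup\{(0,j+k):j\equiv 2,3\pmod 4\},$$
and verify, using the distance formula $\min\{|x-u|+|y-v|,\,x+(t-u)+|(y-k)-v|\}$, that the ``distance $\le 2$'' relation on $U$ is a single bi-infinite path (the only nontrivial seam distances being $1+|j-j'|$); its proper $2$-coloring produces two classes each of which is a $2$-packing for every $k\ge 3$, $t>k$. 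This yields $\chi_S\le 4$, and in the pattern/shift formalism it is encoded by checkerboard patterns on the interior columns and modified period-$4$ patterns on $B_0$ and $B_{t-1}$.

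For the lower bound I would argue by contradiction: a $3$-coloring would give a $2$-packing $C_3$ meeting every odd cycle, so that $G-C_3$ is bipartite. Here a naive density count does not work, since a $2$-packing has density at most $1/5$ while hitting all the short winding cycles $Z_m$ (through row $m$) requires only density about $1/(k+1)$, which is compatible. Instead I would pass to the $\mathbb Z^2$-grid $\Gamma$ obtained by unrolling the cylinder, with deck shift $\tau(x,y)=(x+t,y-k)$; since $k+t$ is odd the checkerboard parity $\beta$ on $\Gamma$ satisfies $\beta\circ\tau=\beta+1$, which is exactly the obstruction to bipartiteness. The lift $\widetilde{C_3}$ is again a set of pairwise distance $\ge 3$, and the key lemma is that deleting such a sparse set from $\mathbb Z^2$ leaves a connected graph, because any path can detour around an isolated deleted vertex without meeting another. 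Connectedness of $\Gamma-\widetilde{C_3}$ then joins some vertex to its $\tau$-image, producing a closed walk of odd winding number and hence (via $\beta\circ\tau=\beta+1$) of odd length inside $G-C_3$, contradicting bipartiteness. Thus $\chi_S\ge 4$, and with the construction $\chi_S(G(k,t))=4$.

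The step I expect to be the main obstacle is precisely this last one: the gap between the (easy) necessary condition that $C_3$ meets every odd cycle and the density bound on $2$-packings is too loose to close by counting, so one must invoke the connectivity/covering argument to show that a $2$-packing is simply too sparse to break the global winding parity.
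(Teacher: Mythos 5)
Your proposal is correct, but it reaches the lower bound by a genuinely different route than the paper. For $\chi_S(G(k,t))\ge 4$ the paper argues purely locally: in a hypothetical $(1,1,2)$-coloring it chases the forced colours around the eight cells surrounding any vertex of colour $3$ to conclude that all four of its neighbours carry the same colour $a\in\{1,2\}$, so every colour-$3$ vertex can be recoloured with the missing colour $b$ (colour-$3$ vertices are pairwise non-adjacent, so these recolourings do not interfere), yielding a proper $2$-colouring and contradicting non-bipartiteness. You instead prove the stronger structural statement that no $2$-packing is an odd-cycle transversal of $G(k,t)$: you lift to the cover $\mathbb{Z}^2$, note that the lift of a $2$-packing is again a $2$-packing (distinct lifts of one vertex are $t+k\ge 7$ apart in the grid metric), show by the local detour argument that $\mathbb{Z}^2$ minus a $2$-packing remains connected, and use the parity twist $\beta\circ\tau=\beta+1$ of the deck transformation to produce an odd closed walk avoiding $C_3$. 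This is more machinery than the paper needs, but it is sound and somewhat more robust, since it uses only that colour $3$ is a $2$-packing and that the rest is bipartite. Your upper bound is essentially the paper's construction relocated: the paper runs the checkerboard $[1,2]$ on all columns except $B_1,B_2$, which receive the period-$4$ pattern $[3,4,2,1]$ with shifts $(0,2,0,1^{t-3})$, whereas you place the two period-$4$ ``repair'' columns at the seam $B_{t-1},B_0$ and check directly that the distance-at-most-$2$ graph on your transversal $U$ is a bi-infinite path, hence splits into two $2$-packings; both versions implicitly use $t\ge 4$, which holds because $t>k\ge 3$. The even case is identical in the two treatments.
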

\begin{proof}
If $ k+t $ is even, $ G(k,t) $ is bipartite, hence $\chi_S(G(k,t))=2$.
In the rest of the proof, we assume $k+t$ is odd, which implies $\chi_S(G(k,t))\ge 3$. 

Suppose that $\chi_S(G(k,t))=3$, and let $c:V(G(k,t))\rightarrow [3]$ be a $ (1,1,2) $-coloring of $G(k,t)$. Suppose that there is a vertex $ (x,y) \in V(G(k,t)) $ in column $ B_x$, where $x \notin \{0,t\}$, with $ c(x,y)=3 $. For this choice of $ x $, the neighborhood of $ (x,y) $ is the set $ N((x,y)) = \{(x,y+1),(x-1,y),(x+1,y),(x,y-1)\} $.

Note that all vertices in $ N((x,y)) $ have to receive either color $ 1 $ or $ 2 $, since every two vertices with color $ 3 $ must be at the distance at least $ 3 $.
Let $ c(x,y+1)=a $, where $ a\in \{1,2\} $, and let $ \{a,b\}=\{1,2\} $.
Hence, we derive the following chain of implications:
$$
\begin{array}{lll}
 c(x,y+1)=a \Rightarrow c(x+1,y+1)=b \Rightarrow c(x+1,y)=a \Rightarrow  c(x+1,y-1)=b \\ \Rightarrow    c(x,y-1)=a\Rightarrow c(x-1,y-1)=b \Rightarrow c(x-1,y)=a \Rightarrow c(x-1,y+1)=b. \\
\end{array}
$$
We infer that, for every vertex $ (x,y) $ with $ c(x,y)=3 $, all neighbors receive the same color.
(If $x\in \{0,t\}$, we get the same conclusion either by checking the neighborhood of $(x,y)$ as above, or by noting that columns $ B_0 $ and $ B_t $ represent the same yet shifted column by which the coloring can be reassigned so that the corresponding point with color $3$ is in one of columns between $1$ and $t-1$.)
Hence, the coloring $ c':V(G(k,t))\rightarrow [2] $ obtained from $ c $ by recoloring every vertex $ (i,j) $ with $ c(i,j)=3 $ by using the color in $\{1,2\}$ which does not appear in its neighborhood with respect to coloring $ c $ is a $ (1,1) $-coloring of $ G(k,t) $.
Thus, $ G(k,t) $ is bipartite, which is a contradiction with the assumption that $ k+t $ is odd.
We derive that $ \chi_S(G(k,t))\geq 4 $. 

For the proof of the upper bound, we present a $ (1,1,2,2) $-coloring $ c $ given by the shift sequence $ (p_i)_{i=0}^{t-1} = (0,2,0,1^{t-3}) $ and two periodic patterns $ [1,2] $ and $ [3,4,2,1] $ such that the pattern $ [3,4,2,1] $ is used on columns $ B_1 $ and $ B_2 $, and rest of the columns obtain pattern $ [1,2] $.
That is, $ c $ is defined by:
$$
[1,2]_{p_0=0} [3,4,2,1]_{p_1=2} [3,4,2,1]_{p_2=0} [1,2]_{p_{3 \rightarrow t-1}=1} [1,2].
$$
Note that $ c $ is a $ (1,1,2,2) $-coloring of $ G(k,t) $ if and only if three conditions hold: every two vertices with the same color are at sufficient distance, columns $ B_0 $ and $ B_t $ must obtain the same pattern and $ \sum_{i=0}^{t-1} p_i \equiv k \ (\mathrm{mod} \ 2) $.
The matrix below demonstrates the presented coloring $ c $ of $ G(k,t) $ in the first five columns (note that bold integers indicate the location of reference points):
$$
\begin{tabular}{c}
	\vdots \hspace{6pt} \vdots \hspace{6pt} \vdots \hspace{6pt} \vdots \hspace{6pt} \vdots \\
	\textbf{1} \ \textbf{3} \ 2 \ 1 \ 2 \\
	2 \ 4 \ 1 \ 2 \ 1 \\
	1 \ 2 \ \textbf{3} \ \textbf{1} \ 2 \\
	2 \ 1 \ 4 \ 2 \ \textbf{1} \\
	1 \ 3 \ 2 \ 1 \ 2 \\
	2 \ 4 \ 1 \ 2 \ 1 \\
	1 \ 2 \ 3 \ 1 \ 2 \\
	2 \ 1 \ 4 \ 2 \ 1 \\
	\vdots \hspace{6pt} \vdots \hspace{6pt} \vdots \hspace{6pt} \vdots \hspace{6pt} \vdots \\
\end{tabular}
$$
It is easy to verify that all vertices with the same color are at sufficient distance and due to $ t \geq 4 $, columns $ B_0 $ and $ B_t $ always use the same pattern $ [1,2] $.
Thus, the first and the second condition hold.
%What remains is to verify the third condition $ \sum_{i=0}^{t-1} p_i \equiv k \ (\mathrm{mod} \ 2) $.
%Let $ t \equiv \ell \ (\mathrm{mod} \ 2) $, where $ \ell \in \{0,1\} $.
From the following sum we obtain:
$$ \sum_{i=0}^{t-1} p_i = 2 + (t-3) = t-1 \equiv \left\{
\begin{array}{ll}
	0 \ (\mathrm{mod} \ 2); \ t \textit{ odd},\\
	1 \ (\mathrm{mod} \ 2); \ t \textit{ even}.
\end{array}\right. $$
Since $ k $ and $ t $ are of opposite parity, the condition $ \sum_{i=0}^{t-1} p_i \equiv k \ (\mathrm{mod} \ 2) $ also holds.

Therefore, $ \chi_S(G(k,t)) = 4 $ for $ k+t $ odd. \qed
\end{proof}

\subsection{$ S = (1,2,2,2,\dots) $}
\label{ss:1122}

\begin{theorem}
	If $ G(k,t) $ is the distance graph, where $k,t$ are coprime positive integers such that $3 \leq k < t $, and $ S = (1,2^{\infty}) $, then $ \chi_S(G(k,t)) = 5 $.
\end{theorem}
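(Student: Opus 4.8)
The plan is to prove matching bounds $\chi_S(G(k,t))\ge 5$ and $\chi_S(G(k,t))\le 5$, the first by a short local argument and the second by an explicit family of colorings in the pattern/shift formalism.

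For the lower bound I would argue entirely inside $G=G(k,t)$, which is $4$-regular with $N(v)=\{v\pm k,\,v\pm t\}$. The crucial observation is that the closed ball $B_1(v)=\{v\}\cup N(v)$ consists of five distinct vertices (using $0<k<t$), any two of which are at distance at most $2$ in $G$: every pairwise difference among $v-t,v-k,v+k,v+t$, namely $2k,\,2t,\,t-k,\,t+k$, is a sum of at most two elements of $\{\pm k,\pm t\}$. Since $S=(1,2^\infty)$, in any $S$-packing coloring color $1$ is an independent set while each color $i\ge 2$ is a $2$-packing, hence meets $B_1(v)$ in at most one vertex. Suppose a $(1,2,2,2)$-coloring existed. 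Then colors $2,3,4$ cover at most three of the five vertices of $B_1(v)$, so at least two vertices of $B_1(v)$ have color $1$; but if $v$ itself had color $1$, independence would forbid color $1$ on all four neighbors, leaving only one color-$1$ vertex in $B_1(v)$, a contradiction. Thus no vertex gets color $1$, whereupon colors $2,3,4$ alone must cover each $5$-vertex ball $B_1(v)$, again impossible. Hence $\chi_S(G(k,t))\ge 5$.

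For the upper bound I would construct a $(1,2,2,2,2)$-coloring, i.e. a partition of $V$ into one independent set (color $1$) and four $2$-packings (colors $2,3,4,5$). The density budget $\tfrac12+4\cdot\tfrac15>1$ is comfortable, which reflects the extra freedom obtained by letting color $1$ be merely independent rather than a $2$-packing. The natural template is the perfect Lee tiling of $\mathbb{Z}^2$ given by $c(x,y)=(x+2y)\bmod 5$, whose five classes are each $2$-packings of density $\tfrac15$ on the flat grid; I would use such a periodic pattern on the interior columns and then confront the real difficulty at the seam $(0,j)\sim(t,j-k)$. This is the main obstacle: a coloring that is a $2$-packing coloring on the flat strip need not respect the short distances created by the wraparound, and in fact a \emph{pure} $2$-packing $5$-coloring of $G(k,t)$ does not always exist (this is precisely why $\chi_{(2^\infty)}(G(k,t))$ is sometimes larger than $5$). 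Here the independent color $1$ is the tool that patches the seam: devoting color $1$ to a suitable independent set of density close to $\tfrac12$ — the checkerboard set $\{(x,y):x+y\text{ even}\}$, which is independent exactly when $k+t$ is even, and an analogous independent set of density just below $\tfrac12$ when $k+t$ is odd — leaves the remaining vertices on a sublattice whose relevant distance-$2$ graph is a king-type grid of chromatic number $4$, so that colors $2,3,4,5$ suffice.

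Concretely I would present these colorings via explicit periodic patterns and a shift sequence $(p_i)_{i=0}^{t-1}$, and verify the three standard conditions from Section~2: pairwise distance $>s_i$ within each class, that columns $B_0$ and $B_t$ carry the same pattern, and that $\sum_{i=0}^{t-1}p_i\equiv k\pmod d$. I expect the bulk of the work, and the genuine obstacle, to be making the construction uniform over all coprime pairs $3\le k<t$: the seam shift $k$ interacts with the pattern period modulo small integers, so the argument almost certainly splits into a few cases according to the residues of $k$ and $t$ (at least their parities, and possibly residues modulo $5$), each with its own pattern and shift sequence. The verification in each case is routine but must be done carefully for the pairs of vertices straddling the seam — exactly the pairs whose distance constraints fail for the flat $(x+2y)\bmod 5$ coloring, and exactly where the independent color $1$ earns its place.
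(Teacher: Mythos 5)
Your lower bound is correct and self-contained: the closed neighbourhood $B_1(v)=\{v,v\pm k,v\pm t\}$ consists of five distinct vertices with pairwise distance at most $2$ (every difference $2k$, $2t$, $t-k$, $t+k$ is a sum of two generators), each colour $i\ge 2$ meets it at most once, and your two-step contradiction correctly handles both the case where colour $1$ is used and the case where it is not. This is arguably tidier than the paper, which cites $\chi_S(\mathbb{Z}^2)=5$ from Goddard--Xu and only parenthetically sketches a local argument.

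The gap is the upper bound, which is where essentially all of the content of this theorem lives. What you have written is a strategy, not a construction: you never exhibit the periodic patterns and shift sequences, and the two concrete ingredients you do name are not established. The checkerboard set $\{(x,y): x+y \text{ even}\}$ is a well-defined independent set only when $k+t$ is even (otherwise the identification $(0,j)\sim(t,j-k)$ flips parity), and for $k+t$ odd you offer only ``an analogous independent set of density just below $\tfrac12$'', which is not a definition. More importantly, the claim that the complementary sublattice can be $4$-coloured into $2$-packings because its distance-at-most-$2$ graph is ``a king-type grid of chromatic number $4$'' is exactly the assertion that must survive the wraparound: pairs such as $(1,j)$ and $(t-1,j-k)$, at distance $2$ through the seam, impose congruence conditions relating $k$ and $t$ that no single periodic colouring satisfies uniformly. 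This is why the paper's proof needs two interior patterns $A=[1,2,3,1,4,5]$ and $B=[4,1,5,2,1,3]$ with six shift sequences chosen according to $(k-2t)\bmod 6$ for $t\ge 12$, together with separate ad hoc colourings for each $t\in\{4,\dots,11\}$ (including a period-$16$ pattern for $G(3,5)$). You correctly predict that the argument ``almost certainly splits into a few cases'', but predicting the case analysis is not the same as carrying it out; as it stands the inequality $\chi_S(G(k,t))\le 5$ is unproved.
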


\begin{proof}
	Due to the representation of $ G(k,t) $ as the (shifted) square grid $\{0,1,\dots,t\} \times \mathbb{Z} $, when determining the lower bound of $ \chi_S(G(k,t)) $, we can use results known for the infinite grid $ \mathbb{Z}^2 $. %with vertices given by points $ (i,j), i,j \in \mathbb{Z} $.
	Goddard and Xu~\cite{goddard2014} proved that $ \chi_S(\mathbb{Z}^2) = 5 $,
	hence $ \chi_S(G(k,t)) \geq 5 $. (Alternatively, the same conclusion is derived when observing a vertex in $G(k,t)$ receiving color $1$, and noting that its neighbors must receive pairwise distinct colors from $\{2,\ldots,5\}$.) 
	
	Next, we determine $ (1,2^4) $-colorings of $ G(k,t) $ with respect to the values $k$ and $t $.
	\begin{enumerate}
		\item Let $ t \geq 12 $.
		In this case, we present six different $(1,2^4)$-colorings $ c_n $, where $ n \in \{0,1,2,3,4,5\} $, which are then applied with respect to $k$ and $t$.
		The first $(1,2^4)$-coloring $ c_0 $ is given by just one periodic pattern $ A = [1,2,3,1,4,5] $ and the shift sequence $ \mathbf{q_0} = (2^t) $, while the rest of $ c_n $ given by the shift sequences $ \mathbf{q_n}, n \in \{1,2,3,4,5\} $, also use another pattern $ B = [4,1,5,2,1,3] $ in addition to pattern $ A $. More precisely,
		$$
		\begin{array}{l}
			%\mathbf{q_0} = (2^t), \\
			\mathbf{q_1} = (0,5,2^{t-2}), \\
			\mathbf{q_2} = ((0,5)^2,2^{t-4}), \\
			\mathbf{q_3} = ((0,5)^3,2^{t-6}), \\
			\mathbf{q_4} = ((0,5)^4,2^{t-8}), \\
			\mathbf{q_5} = ((0,5)^5,2^{t-10}), \\
		\end{array}
		$$
		and 
		$$
		\begin{array}{l}
			c_1: A_{p_0=0} B_{p_1=5} A_{p_{2 \rightarrow t-1}=2} A, \\
			c_2: A_{p_0=0} B_{p_1=5} A_{p_2=0} B_{p_3=5} A_{p_{4 \rightarrow t-1}=2} A, \\
			c_3: A_{p_0=0} B_{p_1=5}A_{p_2=0} B_{p_3=5} A_{p_4=0} B_{p_5=5} A_{p_{6 \rightarrow t-1}=2} A, \\
			c_4: A_{p_0=0} B_{p_1=5} A_{p_2=0} B_{p_3=5} A_{p_4=0} B_{p_5=5} A_{p_6=0} B_{p_7=5} A_{p_{8 \rightarrow t-1}=2} A, \\
			c_5: A_{p_0=0} B_{p_1=5} A_{p_2=0} B_{p_3=5} A_{p_4=0} B_{p_5=5} A_{p_6=0} B_{p_7=5} A_{p_8=0} B_{p_9=5} A_{p_{10 \rightarrow t-1}=2} A. \\
		\end{array}
		$$
		Recall that $ c_n $ is a $ (1,2^4) $-coloring of $ G(k,t) $ if and only if three conditions hold: every two vertices with the same color are at sufficient distance, columns $ B_0 $ and $ B_t $ obtain the same pattern, and $ \sum_{i=0}^{t-1} p_i \equiv k \ (\mathrm{mod} \ 6) $.
		From the definition of colorings $ c_n $ we immediately see that both columns $ B_0 $ and $ B_t $ obtain the pattern $ A $, hence the second condition holds.
		
		To verify the first condition, we consider the possible pattern layouts in three consecutive columns, $B_i,B_{i+1},B_{i+2}$, using the patterns $ A $ and $ B $. Since $B_0$ and $B_t$ represent the same, yet shifted column, the case when $i=t-1$ is interpreted as $B_{t-1},B_t,B_1$.  
		We have five possibilities: $AAA$,  $BAA$, $ABA$, $AAB$ and $BAB$. Note that when $AAA$ is used,  in all presented sequences the two shifts are $p_i=2=p_{i+1}$, and it is easy to verify that the points with the same colors are at sufficient distances. This correspondence will be presented as:  
		$$AAA\longleftrightarrow 2,2.$$
Similarly, we have the following correspondences between pattern layouts and shifts, which can be derived from the definitions of sequences $\mathbf{q_n}$:
$$
\begin{array}{c}
BAA\longleftrightarrow 5,2, \\
ABA\longleftrightarrow 0,5, \\
AAB\longleftrightarrow 2,0, \\
BAB\longleftrightarrow 5,0.\\
\end{array}
$$		
(Note that $AAB$ appears when considering the columns $B_{t-1}$, $B_t$ and $B_1$.) 
The corresponding matrices for each of the five cases are placed below.

$$
\begin{array}{lllll}
     \begin{array}{ccc} A& \hspace*{-6pt}A& \hspace*{-6pt}A \\ 
     		\vdots & \hspace*{-6pt} \vdots & \hspace*{-6pt} \vdots \\
     		\mathbf{1} & \hspace*{-6pt} 4 & \hspace*{-6pt} 3 \\
     		2 & \hspace*{-6pt} 5 & \hspace*{-6pt} 1 \\
     		3 &  \hspace*{-6pt}\mathbf{1} & \hspace*{-6pt} 4\\
     		1 & \hspace*{-6pt} 2 & \hspace*{-6pt} 5 \\
     		4 & \hspace*{-6pt} 3 & \hspace*{-6pt} \mathbf{1} \\
     		5 & \hspace*{-6pt} 1 & \hspace*{-6pt} 2 \\
     		1 & \hspace*{-6pt} 4 & \hspace*{-6pt} 3 \\
     		2 & \hspace*{-6pt} 5 & \hspace*{-6pt} 1 \\
     		\vdots & \hspace*{-6pt} \vdots & \hspace*{-6pt} \vdots      		
     \end{array}	
     & \quad
	 \begin{array}{ccc} A&\hspace*{-6pt} A&\hspace*{-6pt} B \\ 
	 		\vdots & \hspace*{-6pt} \vdots & \hspace*{-6pt} \vdots \\
     		\mathbf{1} & \hspace*{-6pt} 4 & \hspace*{-6pt} 1 \\
     		2 & \hspace*{-6pt} 5 & \hspace*{-6pt} 3 \\
     		3 &  \hspace*{-6pt} \mathbf{1} & \hspace*{-6pt} \mathbf{4}\\
     		1 & \hspace*{-6pt} 2 & \hspace*{-6pt} 1 \\
     		4 & \hspace*{-6pt} 3 & \hspace*{-6pt} 5 \\
     		5 & \hspace*{-6pt} 1 & \hspace*{-6pt} 2 \\
     		1 & \hspace*{-6pt} 4 & \hspace*{-6pt} 1 \\
     		2 & \hspace*{-6pt} 5 & \hspace*{-6pt} 3 \\
     		\vdots & \vdots & \vdots      		
	 \end{array}
     & \quad
	 \begin{array}{ccc} A&\hspace*{-6pt} B&\hspace*{-6pt} A \\ 
	 		\vdots & \hspace*{-6pt} \vdots & \hspace*{-6pt} \vdots \\
			\mathbf{1} & \hspace*{-6pt} \mathbf{4} & \hspace*{-6pt} 2 \\
			2 & \hspace*{-6pt} 1 & \hspace*{-6pt} 3 \\
			3 & \hspace*{-6pt} 5 & \hspace*{-6pt} 1 \\
			1 & \hspace*{-6pt} 2 & \hspace*{-6pt} 4 \\
			4 & \hspace*{-6pt} 1 & \hspace*{-6pt} 5 \\
			5 & \hspace*{-6pt} 3 & \hspace*{-6pt} \mathbf{1} \\
			1 & \hspace*{-6pt} 4 & \hspace*{-6pt} 2 \\
			2 & \hspace*{-6pt} 1 & \hspace*{-6pt} 3 \\
     		\vdots & \hspace*{-6pt} \vdots & \hspace*{-6pt} \vdots      		
	 \end{array}
     & \quad
	 \begin{array}{ccc} B&\hspace*{-6pt} A&\hspace*{-6pt} A \\ 
	 		\vdots & \hspace*{-6pt} \vdots & \hspace*{-6pt} \vdots \\
	 		\mathbf{4} & \hspace*{-6pt} 2 & \hspace*{-6pt} 5 \\
	 		1 & \hspace*{-6pt} 3 & \hspace*{-6pt} 1 \\
	 		5 & \hspace*{-6pt} 1 & \hspace*{-6pt} 2 \\
	 		2 & \hspace*{-6pt} 4 & \hspace*{-6pt} 3 \\
	 		1 & \hspace*{-6pt} 5 & \hspace*{-6pt} 1 \\
	 		3 & \hspace*{-6pt} \mathbf{1} & \hspace*{-6pt} 4 \\
	 		4 & \hspace*{-6pt} 2 & \hspace*{-6pt} 5 \\
	 		1 & \hspace*{-6pt} 3 & \hspace*{-6pt} \mathbf{1} \\
	 		\vdots & \hspace*{-6pt} \vdots & \hspace*{-6pt} \vdots
	 \end{array}
     & \quad
	 \begin{array}{ccc} B&\hspace*{-6pt} A&\hspace*{-6pt} B \\ 
	 		\vdots & \hspace*{-6pt} \vdots & \hspace*{-6pt} \vdots \\
	 		\mathbf{4} & \hspace*{-6pt} 2 & \hspace*{-6pt} 1 \\
	 		1 & \hspace*{-6pt} 3 & \hspace*{-6pt} 5 \\
	 		5 & \hspace*{-6pt} 1 & \hspace*{-6pt} 2 \\
	 		2 & \hspace*{-6pt} 4 & \hspace*{-6pt} 1 \\
	 		1 & \hspace*{-6pt} 5 & \hspace*{-6pt} 3 \\
	 		3 & \hspace*{-6pt} \mathbf{1} & \hspace*{-6pt} \mathbf{4} \\
	 		4 & \hspace*{-6pt} 2 & \hspace*{-6pt} 1 \\
	 		1 & \hspace*{-6pt} 3 & \hspace*{-6pt} 5 \\
	 		\vdots & \hspace*{-6pt} \vdots & \hspace*{-6pt} \vdots 		
	 \end{array}
\end{array}
$$

Again, in all cases it is easy to verify that the points with the same colors are at sufficient distances. In this way, the first condition is also verified. 

What remains is to use the third condition $ \sum_{i=0}^{t-1} p_i \equiv k \ (\mathrm{mod} \ 6) $ to determine which sequence $ \mathbf{q_n} $ is suitable for $ G(k,t) $ with respect to values $ k,t $.
		Let $ t \equiv \ell \ (\mathrm{mod} \ 6) $ where $ \ell \in \{0,1,2,3,4,5\} $.
		For each $ \mathbf{q_n} $ we calculate the value of $ \sum_{i=0}^{t-1} p_i $:
%		$$
%		\begin{array}{l}
%			\mathbf{q_0}: \sum_{i=0}^{t-1} p_i = 2t \equiv 2\ell \ (\mathrm{mod} \ 6), \\
%			\\
%			\mathbf{q_1}: \sum_{i=0}^{t-1} p_i = 5 + 2(t-2) = 2t + 1 \equiv (2\ell+1) \ (\mathrm{mod} \ 6), \\
%			\\
%			\mathbf{q_2}: \sum_{i=0}^{t-1} p_i = 5 + 5 + 2(t-4) = 2t + 2 \equiv (2\ell+2) \ (\mathrm{mod} \ 6), \\
%			\\
%			\mathbf{q_3}: \sum_{i=0}^{t-1} p_i = 5 + 5 + 5 + 2(t-6) = 2t + 3 \equiv (2\ell+3) \ (\mathrm{mod} \ 6), \\
%			\\
%			\mathbf{q_4}: \sum_{i=0}^{t-1} p_i = 5 + 5 + 5 + 5 + 2(t-8) = 2t + 4 \equiv (2\ell+4) \ (\mathrm{mod} \ 6), \\
%			\\
%			\mathbf{q_5}: \sum_{i=0}^{t-1} p_i = 5 + 5 + 5 + 5 + 5 + 2(t-10) = 2t + 5 \equiv (2\ell+5) \ (\mathrm{mod} \ 6). \\
%		\end{array}
%		$$
$$
		\mathbf{q_n}: \sum_{i=0}^{t-1} p_i = 5n + 2(t-2n) = 2t + n. % \equiv (2\ell+n) \ (\mathrm{mod} \ 6).
	$$
		Due to appropriate numbering of sequences we obtain $ \sum_{i=0}^{t-1} p_i \equiv (2\ell+n) \ (\mathrm{mod} \ 6) $ for each $ \mathbf{q_n} $.
		In order to determine which $ c_n $ gives us the $ (1,2^4) $-coloring of $ G(k,t) $ for fixed values $ k,t $, let $ k \equiv m \ (\mathrm{mod} \ 6) $ where $ m \in \{0,1,2,3,4,5\} $.
		Using the condition $ \sum_{i=0}^{t-1} p_i \equiv k \ (\mathrm{mod} \ 6) $ we derive:
		$$
		\begin{array}{c}
			(2\ell+n) \ (\mathrm{mod} \ 6) = m \, \Longrightarrow \,
			n = (m-2\ell) \ (\mathrm{mod} \ 6).
		\end{array}
		$$
		Thus, if $ k \equiv m \ (\mathrm{mod} \ 6) $ and $  t \equiv \ell \ (\mathrm{mod} \ 6) $ then the $ (1,2^4) $-coloring of $ G(k,t) $ is given by $ c_n $ using the sequence $ \mathbf{q_n} $ such that $ n = (m-2\ell) \ (\mathrm{mod} \ 6) $.
		
		\item Let $ t = 11 $, hence $ k \in \{3,4,5,6,7,8,9,10\} $.
		For $ G (3,11) $ and $ G(8,11) $ we present a $ (1,2^4) $-coloring given by the periodic pattern $ [1,2,3,4,5] $ and the shift sequence $ (p_i)_{i=0}^{10} = (3^{11}) $.
		The matrix below demonstrates the presented $ (1,2^4) $-coloring of these graphs:
		$$
		\begin{tabular}{c}
			\vdots \hspace{6pt} \vdots \hspace{6pt} \vdots \hspace{6pt} \vdots \hspace{6pt} \vdots \hspace{6pt} \vdots \hspace{6pt} \vdots \hspace{6pt} \vdots \hspace{6pt} \vdots \hspace{6pt} \vdots \hspace{6pt} \vdots \hspace{6pt} \vdots \\
			\textbf{1} \ 3 \ 5 \ 2 \ 4 \ 1 \ 3 \ 5 \ 2 \ 4 \ 1 \ 3 \\
			2 \ 4 \ 1 \ 3 \ 5 \ 2 \ 4 \ 1 \ 3 \ 5 \ 2 \ 4 \\
			3 \ 5 \ 2 \ 4 \ 1 \ 3 \ 5 \ 2 \ 4 \ 1 \ 3 \ 5 \\
			4 \ \textbf{1} \ 3 \ 5 \ 2 \ 4 \ 1 \ 3 \ 5 \ 2 \ 4 \ 1 \\		
			5 \ 2 \ 4 \ 1 \ 3 \ 5 \ 2 \ 4 \ 1 \ 3 \ 5 \ 2 \\
			1 \ 3 \ 5 \ 2 \ 4 \ 1 \ 3 \ 5 \ 2 \ 4 \ 1 \ 3 \\
			2 \ 4 \ \textbf{1} \ 3 \ 5 \ 2 \ 4 \ 1 \ 3 \ 5 \ 2 \ 4 \\
			3 \ 5 \ 2 \ 4 \ 1 \ 3 \ 5 \ 2 \ 4 \ 1 \ 3 \ 5 \\
			4 \ 1 \ 3 \ 5 \ 2 \ 4 \ 1 \ 3 \ 5 \ 2 \ 4 \ 1 \\
			\vdots \hspace{6pt} \vdots \hspace{6pt} \vdots \hspace{6pt} \vdots \hspace{6pt} \vdots \hspace{6pt} \vdots \hspace{6pt} \vdots \hspace{6pt} \vdots \hspace{6pt} \vdots \hspace{6pt} \vdots \hspace{6pt} \vdots \hspace{6pt} \vdots \\
		\end{tabular}
		$$
		It is easy to verify that all vertices with the same color are at a sufficient distance and the condition $ \sum_{i=0}^{10} p_i \equiv 3 \ (\mathrm{mod} \ 5) $ holds for both graphs.
		
		For $ G(4,11) $, $ G(5,11) $, $ G(6,11) $, $ G(7,11) $, $G(9,11)$ and $ G(10,11) $ we use $ (1,2^4) $-colorings $ c_0 $, $ c_1 $, $ c_2 $, $ c_3 $, $c_5$ and $ c_0 $ from case $ 1 $, respectively.
		
%		For $ G(9,11) $ we present an $ (1,2^4) $-coloring given by the periodic pattern $ C = [1,2,1,3,1,4,1,5] $ and the shift sequence $ (p_i)_{i=0}^{10} = (3^{11}) $.
%		Again, the matrix below demonstrates presented $ (1,2^4) $-coloring of this graph:
%		$$
%		\begin{tabular}{c}
%			\vdots \hspace{6pt} \vdots \hspace{6pt} \vdots \hspace{6pt} \vdots \hspace{6pt} \vdots \hspace{6pt} \vdots \hspace{6pt} \vdots \hspace{6pt} \vdots \hspace{6pt} \vdots \hspace{6pt} \vdots \hspace{6pt} \vdots \hspace{6pt} \vdots \\
%			\textbf{1} \ 4 \ 1 \ 5 \ 1 \ 2 \ 1 \ 3 \ 1 \ 4 \ 1 \ 5 \\
%			2 \ 1 \ 3 \ 1 \ 4 \ 1 \ 5 \ 1 \ 2 \ 1 \ 3 \ 1 \\
%			1 \ 5 \ 1 \ 2 \ 1 \ 3 \ 1 \ 4 \ 1 \ 5 \ 1 \ 2 \\
%			3 \ \textbf{1} \ 4 \ 1 \ 5 \ 1 \ 2 \ 1 \ 3 \ 1 \ 4 \ 1 \\		
%			1 \ 2 \ 1 \ 3 \ 1 \ 4 \ 1 \ 5 \ 1 \ 2 \ 1 \ 3 \\
%			4 \ 1 \ 5 \ 1 \ 2 \ 1 \ 3 \ 1 \ 4 \ 1 \ 5 \ 1 \\
%			1 \ 3 \ \textbf{1} \ 4 \ 1 \ 5 \ 1 \ 2 \ 1 \ 3 \ 1 \ 4 \\
%			5 \ 1 \ 2 \ 1 \ 3 \ 1 \ 4 \ 1 \ 5 \ 1 \ 2 \ 1 \\
%			1 \ 4 \ 1 \ 5 \ 1 \ 2 \ 1 \ 3 \ 1 \ 4 \ 1 \ 5 \\
%			2 \ 1 \ 3 \ \textbf{1} \ 4 \ 1 \ 5 \ 1 \ 2 \ 1 \ 3 \ 1 \\
%			\vdots \hspace{6pt} \vdots \hspace{6pt} \vdots \hspace{6pt} \vdots \hspace{6pt} \vdots \hspace{6pt} \vdots \hspace{6pt} \vdots \hspace{6pt} \vdots \hspace{6pt} \vdots \hspace{6pt} \vdots \hspace{6pt} \vdots \hspace{6pt} \vdots \\
%		\end{tabular}
%		$$
%		It is easy to verify that all vertices with the same color are at a sufficient distance and $ \sum_{i=0}^{10} p_i \equiv 1 \ (\mathrm{mod} \ 8) $, hence the condition $ \sum_{i=0}^{10} p_i \equiv k \ (\mathrm{mod} \ 8) $ holds.
		
		\item Let $ t = 10 $, hence $ k \in \{3,7,9\} $.
		For both $ G(3,10) $ and $ G(9,10) $ we use $ (1,2^4) $-coloring $ c_1 $ from case $ 1 $.
		
		For $ G(7,10) $ we present a $ (1,2^4) $-coloring given by two periodic patterns $ C = [1,2,1,3,1,4,1,5] $ and $ D = [4,3,5,4,2,5,3,2] $, and the shift sequence $ (p_i)_{i=0}^9 = ((0,1)^3,3^4) $ such that:
		$$
		C_{p_0=0} D_{p_1=1} C_{p_2=0} D_{p_3=1} C_{p_4=0} D_{p_5=1} C_{p_{6 \rightarrow 9}=3} C.
		$$
		The matrix below demonstrates the presented $ (1,2^4) $-coloring of this graph:
		$$
		\begin{tabular}{c}
			\vdots \hspace{6pt} \vdots \hspace{6pt} \vdots \hspace{6pt} \vdots \hspace{6pt} \vdots \hspace{6pt} \vdots \hspace{6pt} \vdots \hspace{6pt} \vdots \hspace{6pt} \vdots \hspace{6pt} \vdots \hspace{6pt} \vdots \\
			\textbf{1} \ \textbf{4} \ 5 \ 2 \ 1 \ 3 \ 4 \ 1 \ 5 \ 1 \ 2 \\
			2 \ 3 \ \textbf{1} \ \textbf{4} \ 5 \ 2 \ 1 \ 3 \ 1 \ 4 \ 1 \\
			1 \ 5 \ 2 \ 3 \ \textbf{1} \ \textbf{4} \ 5 \ 1 \ 2 \ 1 \ 3 \\
			3 \ 4 \ 1 \ 5 \ 2 \ 3 \ \textbf{1} \ 4 \ 1 \ 5 \ 1 \\		
			1 \ 2 \ 3 \ 4 \ 1 \ 5 \ 2 \ 1 \ 3 \ 1 \ 4 \\
			4 \ 5 \ 1 \ 2 \ 3 \ 4 \ 1 \ 5 \ 1 \ 2 \ 1 \\
			1 \ 3 \ 4 \ 5 \ 1 \ 2 \ 3 \ \textbf{1} \ 4 \ 1 \ 5 \\
			5 \ 2 \ 1 \ 3 \ 4 \ 5 \ 1 \ 2 \ 1 \ 3  \ 1 \\
			\vdots \hspace{6pt} \vdots \hspace{6pt} \vdots \hspace{6pt} \vdots \hspace{6pt} \vdots \hspace{6pt} \vdots \hspace{6pt} \vdots \hspace{6pt} \vdots \hspace{6pt} \vdots \hspace{6pt} \vdots \hspace{6pt} \vdots \\
		\end{tabular}
		$$
		It is easy to verify that all vertices with the same color are at a sufficient distance and the condition $ \sum_{i=0}^{9} p_i \equiv 7 \ (\mathrm{mod} \ 8) $ holds.
		
		\item Let $ t = 9 $, hence $ k \in \{4,5,7,8\} $.
		For $ G(4,9) $ we present a $ (1,2^4) $-coloring given by two periodic patterns $ C = [1,2,1,3,1,4,1,5] $ and $ D = [4,3,5,4,2,5,3,2] $, and the shift sequence $ (p_i)_{i=0}^8 = ((0,1)^3,3^3) $ such that:
		$$
		C_{p_0=0} D_{p_1=1} C_{p_2=0} D_{p_3=1} C_{p_4=0} D_{p_5=1} C_{p_{6 \rightarrow 8}=3} C.
		$$
		The matrix below demonstrates the presented $ (1,2^4) $-coloring of this graph:
		$$
		\begin{tabular}{c}
			\vdots \hspace{6pt} \vdots \hspace{6pt} \vdots \hspace{6pt} \vdots \hspace{6pt} \vdots \hspace{6pt} \vdots \hspace{6pt} \vdots \hspace{6pt} \vdots \hspace{6pt} \vdots \hspace{6pt} \vdots \\
			\textbf{1} \ \textbf{4} \ 5 \ 2 \ 1 \ 3 \ 4 \ 1 \ 5 \ 1 \\
			2 \ 3 \ \textbf{1} \ \textbf{4} \ 5 \ 2 \ 1 \ 3 \ 1 \ 4 \\
			1 \ 5 \ 2 \ 3 \ \textbf{1} \ \textbf{4} \ 5 \ 1 \ 2 \ 1 \\
			3 \ 4 \ 1 \ 5 \ 2 \ 3 \ \textbf{1} \ 4 \ 1 \ 5 \\		
			1 \ 2 \ 3 \ 4 \ 1 \ 5 \ 2 \ 1 \ 3 \ 1 \\
			4 \ 5 \ 1 \ 2 \ 3 \ 4 \ 1 \ 5 \ 1 \ 2 \\
			1 \ 3 \ 4 \ 5 \ 1 \ 2 \ 3 \ \textbf{1} \ 4 \ 1 \\
			5 \ 2 \ 1 \ 3 \ 4 \ 5 \ 1 \ 2 \ 1 \ 3 \\
			\vdots \hspace{6pt} \vdots \hspace{6pt} \vdots \hspace{6pt} \vdots \hspace{6pt} \vdots \hspace{6pt} \vdots \hspace{6pt} \vdots \hspace{6pt} \vdots \hspace{6pt} \vdots \hspace{6pt} \vdots \\
		\end{tabular}
		$$
		It is easy to verify that all vertices with the same color are at a sufficient distance and the condition $ \sum_{i=0}^{8} p_i \equiv 4 \ (\mathrm{mod} \ 8) $ holds.
		
		For $ G(5,9) $ we present a $ (1,2^4) $-coloring given by the periodic pattern $ C = [1,2,1,3,1,4,1,5] $ and the shift sequence $ (p_i)_{i=0}^8 = (5^9) $.
		Again, the matrix below demonstrates the presented $ (1,2^4) $-coloring of this graph:
		$$
		\begin{tabular}{c}
			\vdots \hspace{6pt} \vdots \hspace{6pt} \vdots \hspace{6pt} \vdots \hspace{6pt} \vdots \hspace{6pt} \vdots \hspace{6pt} \vdots \hspace{6pt} \vdots \hspace{6pt} \vdots \hspace{6pt} \vdots \\
			\textbf{1} \ 3 \ 1 \ 2 \ 1 \ 5 \ 1 \ 4 \ 1 \ 3 \\
			2 \ 1 \ 5 \ 1 \ 4 \ 1 \ 3 \ 1 \ 2 \ 1 \\
			1 \ 4 \ 1 \ 3 \ 1 \ 2 \ 1 \ 5 \ 1 \ 4 \\
			3 \ 1 \ 2 \ 1 \ 5 \ 1 \ 4 \ 1 \ 3 \ 1 \\		
			1 \ 5 \ 1 \ 4 \ 1 \ 3 \ 1 \ 2 \ 1 \ 5 \\
			4 \ \textbf{1} \ 3 \ 1 \ 2 \ 1 \ 5 \ 1 \ 4 \ 1 \\
			1 \ 2 \ 1 \ 5 \ 1 \ 4 \ 1 \ 3 \ 1 \ 2 \\
			5 \ 1 \ 4 \ 1 \ 3 \ 1 \ 2 \ 1 \ 5 \ 1 \\
			\vdots \hspace{6pt} \vdots \hspace{6pt} \vdots \hspace{6pt} \vdots \hspace{6pt} \vdots \hspace{6pt} \vdots \hspace{6pt} \vdots \hspace{6pt} \vdots \hspace{6pt} \vdots \hspace{6pt} \vdots \\
		\end{tabular}
		$$
		It is easy to verify that all vertices with the same color are at a sufficient distance and the condition $ \sum_{i=0}^{8} p_i \equiv 5 \ (\mathrm{mod} \ 8) $ holds.
		
		For $ G(7,9) $ and $ G(8,9) $ we use $ (1,2^4) $-colorings $ c_1 $ and $ c_2 $ from case $ 1 $, respectively.
		
		\item Let $ t = 8 $, hence $ k \in \{3,5,7\} $.
		For $ G(3,8) $ we present a $ (1,2^4) $-coloring given by two periodic patterns $ C = [1,2,1,3,1,4,1,5] $ and $ D = [4,3,5,4,2,5,3,2] $, and the shift sequence $ (p_i)_{i=0}^7 = (0,1,3^6) $ such that:
		$$
		C_{p_0=0} D_{p_1=1} C_{p_{2 \rightarrow 7}=3} C.
		$$
		The matrix below demonstrates the presented $ (1,2^4) $-coloring of this graph:
		$$
		\begin{tabular}{c}
			\vdots \hspace{6pt} \vdots \hspace{6pt} \vdots \hspace{6pt} \vdots \hspace{6pt} \vdots \hspace{6pt} \vdots \hspace{6pt} \vdots \hspace{6pt} \vdots \hspace{6pt} \vdots \\
			\textbf{1} \ \textbf{4} \ 5 \ 1 \ 2 \ 1 \ 3 \ 1 \ 4 \\
			2 \ 3 \ \textbf{1} \ 4 \ 1 \ 5 \ 1 \ 2 \ 1 \\
			1 \ 5 \ 2 \ 1 \ 3 \ 1 \ 4 \ 1 \ 5 \\
			3 \ 4 \ 1 \ 5 \ 1 \ 2 \ 1 \ 3 \ 1 \\		
			1 \ 2 \ 3 \ \textbf{1} \ 4 \ 1 \ 5 \ 1 \ 2 \\
			4 \ 5 \ 1 \ 2 \ 1 \ 3 \ 1 \ 4 \ 1 \\
			1 \ 3 \ 4 \ 1 \ 5 \ 1 \ 2 \ 1 \ 3 \\
			5 \ 2 \ 1 \ 3 \ \textbf{1} \ 4 \ 1 \ 5 \ 1 \\
			\vdots \hspace{6pt} \vdots \hspace{6pt} \vdots \hspace{6pt} \vdots \hspace{6pt} \vdots \hspace{6pt} \vdots \hspace{6pt} \vdots \hspace{6pt} \vdots \hspace{6pt} \vdots \\
		\end{tabular}
		$$
		It is easy to verify that all vertices with the same color are at a sufficient distance and the condition $ \sum_{i=0}^{7} p_i \equiv 3 \ (\mathrm{mod} \ 8) $ holds.
		
		For $ G(5,8) $ and $ G(7,8) $ we use $ (1,2^4) $-colorings $ c_1 $ and $ c_3 $ from case $ 1 $, respectively.
				
		\item Let $ t = 7 $, hence $ k \in \{3,4,5,6\} $.
		For $ G(3,7) $, $ G(4,7) $ and $ G(5,7) $ we use $ (1,2^4) $-colorings $ c_1 $, $ c_2 $ and $ c_3 $ from case $ 1 $, respectively.
				
		For $ G (6,7) $ we present a $ (1,2^4) $-coloring given by the periodic pattern $ [1,2,3,4,5] $ and the shift sequence $ (p_i)_{i=0}^6 = (3^7) $.
		The matrix below demonstrates the presented $ (1,2^4) $-coloring of this graph:
		$$
		\begin{tabular}{c}
			\vdots \hspace{6pt} \vdots \hspace{6pt} \vdots \hspace{6pt} \vdots \hspace{6pt} \vdots \hspace{6pt} \vdots \hspace{6pt} \vdots \hspace{6pt} \vdots \\
			\textbf{1} \ 3 \ 5 \ 2 \ 4 \ 1 \ 3 \ 5 \\
			2 \ 4 \ 1 \ 3 \ 5 \ 2 \ 4 \ 1 \\
			3 \ 5 \ 2 \ 4 \ 1 \ 3 \ 5 \ 2 \\
			4 \ \textbf{1} \ 3 \ 5 \ 2 \ 4 \ 1 \ 3 \\		
			5 \ 2 \ 4 \ 1 \ 3 \ 5 \ 2 \ 4 \\
			1 \ 3 \ 5 \ 2 \ 4 \ 1 \ 3 \ 5 \\
			2 \ 4 \ \textbf{1} \ 3 \ 5 \ 2 \ 4 \ 1 \\
			3 \ 5 \ 2 \ 4 \ 1 \ 3 \ 5 \ 2 \\
			\vdots \hspace{6pt} \vdots \hspace{6pt} \vdots \hspace{6pt} \vdots \hspace{6pt} \vdots \hspace{6pt} \vdots \hspace{6pt} \vdots \hspace{6pt} \vdots \\
		\end{tabular}
		$$
		It is easy to verify that all vertices with the same color are at a sufficient distance and $ \sum_{i=0}^{6} p_i \equiv 1 \ (\mathrm{mod} \ 5) $, hence the condition $ \sum_{i=0}^{6} p_i \equiv k \ (\mathrm{mod} \ 5) $ holds.
		
		\item Let $ t = 6 $, hence $ k = 5 $.
		For $ G (5,6) $ we present an $ (1,2^4) $-coloring given by two periodic patterns $ C = [1,2,1,3,1,4,1,5] $ and $ D = [4,3,5,4,2,5,3,2] $, and the shift sequence $ (p_i)_{i=0}^5 = (0,1,3^4) $ such that:
		$$
		C_{p_0=0} D_{p_1=1} C_{p_{2 \rightarrow 5}=3} C.
		$$
		The matrix below demonstrates the presented $ (1,2^4) $-coloring of this graph.
		$$
		\begin{tabular}{c}
			\vdots \hspace{6pt} \vdots \hspace{6pt} \vdots \hspace{6pt} \vdots \hspace{6pt} \vdots \hspace{6pt} \vdots \hspace{6pt} \vdots \\
			\textbf{1} \ \textbf{4} \ 5 \ 1 \ 2 \ 1 \ 3 \\
			2 \ 3 \ \textbf{1} \ 4 \ 1 \ 5 \ 1 \\
			1 \ 5 \ 2 \ 1 \ 3 \ 1 \ 4 \\
			3 \ 4 \ 1 \ 5 \ 1 \ 2 \ 1 \\		
			1 \ 2 \ 3 \ \textbf{1} \ 4 \ 1 \ 5 \\
			4 \ 5 \ 1 \ 2 \ 1 \ 3 \ 1 \\
			1 \ 3 \ 4 \ 1 \ 5 \ 1 \ 2 \\
			5 \ 2 \ 1 \ 3 \ \textbf{1} \ 4 \ 1 \\
			\vdots \hspace{6pt} \vdots \hspace{6pt} \vdots \hspace{6pt} \vdots \hspace{6pt} \vdots \hspace{6pt} \vdots \hspace{6pt} \vdots \\
		\end{tabular}
		$$
		It is easy to verify that all vertices with the same color are at a sufficient distance and the condition $ \sum_{i=0}^{4} p_i \equiv 5 \ (\mathrm{mod} \ 8) $ holds.
		
		\item Let $ t = 5 $, hence $ k \in \{3,4\} $.
		For $ G(3,5) $ we present an $ (1,2^4) $-coloring given by the periodic pattern $ [(1,2,1,3)^2,(1,4,1,5)^2] $ and the shift sequence $ (p_i)_{i=0}^4 = (7^5) $.
		The matrix below demonstrates the presented $ (1,2^4) $-coloring of this graph.
		$$
		\begin{tabular}{c}
			\vdots \hspace{6pt} \vdots \hspace{6pt} \vdots \hspace{6pt} \vdots \hspace{6pt} \vdots \hspace{6pt} \vdots \\
			\textbf{1} \ 4 \ 1 \ 5 \ 1 \ 4 \\
			2 \ 1 \ 3 \ 1 \ 2 \ 1 \\
			1 \ 5 \ 1 \ 4 \ 1 \ 5 \\
			3 \ 1 \ 2 \ 1 \ 3 \ 1 \\
			1 \ 4 \ 1 \ 5 \ 1 \ 2 \\
			2 \ 1 \ 3 \ 1 \ 4 \ 1 \\
			1 \ 5 \ 1 \ 2 \ 1 \ 3 \\
			3 \ \textbf{1} \ 4 \ 1 \ 5 \ 1 \\
			1 \ 2 \ 1 \ 3 \ 1 \ 2 \\
			4 \ 1 \ 5 \ 1 \ 4 \ 1 \\
			1 \ 3 \ 1 \ 2 \ 1 \ 3 \\
			5 \ 1 \ 4 \ 1 \ 5 \ 1 \\
			1 \ 2 \ 1 \ 3 \ 1 \ 4 \\
			4 \ 1 \ 5 \ 1 \ 2 \ 1 \\
			1 \ 3 \ \textbf{1} \ 4 \ 1 \ 5 \\
			5 \ 1 \ 2 \ 1 \ 3 \ 1 \\
			\vdots \hspace{6pt} \vdots \hspace{6pt} \vdots \hspace{6pt} \vdots \hspace{6pt} \vdots \hspace{6pt} \vdots \\
		\end{tabular}
		$$
		It is easy to verify that all vertices with the same color are at a sufficient distance and the condition $ \sum_{i=0}^{4} p_i \equiv 3 \ (\mathrm{mod} \ 16) $ holds.		
		
		For $ G(4,5) $ we use coloring $ c_0 $ from case $ 1 $.
		
		\item Let $ t = 4 $, hence $ k = 3 $.
		For $ G(3,4) $ we use coloring $ c_1 $ from case $ 1 $.
	\end{enumerate}

After verifying all the possible cases for $k$ and $t$, we conclude that $\chi_S(G(k,t))=5$, and the proof is complete. \qed
\end{proof}

\subsection{$ S = (2,2,2,\dots) $}
\label{ss:1222}

\begin{lemma} \label{lemma1}
	If $ G(k,t) $ is the distance graph, where $k,t$ are coprime positive integers such that $3 \leq k < t $, and $ S = (2^{\infty}) $, then $ \chi_S(G(k,t)) \leq 6 $.
\end{lemma}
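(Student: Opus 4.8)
The plan is to prove the bound purely constructively: since the statement is only an upper bound, it suffices to exhibit, for every admissible pair $(k,t)$, an $S$-packing coloring with $S=(2^\infty)$ that uses at most six colors. I would work entirely in the shifted grid representation. Recall that a $(2^\infty)$-coloring is exactly a coloring in which every color class is a set of vertices pairwise at grid-distance at least $3$; by the distance formula $\min\{|x-u|+|y-v|,x+(t-u)+|(y-k)-v|\}$ and the verification criteria at the end of Section~2, it is enough to ensure for each color that no two equally colored vertices are adjacent in the grid, that no two share a common grid-neighbor, and that the wrap-around inequality relating columns $B_1$ and $B_{t-1}$ is respected.

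First I would try a single periodic pattern of length $6$, namely $A=[1,2,3,4,5,6]$, applied to every column, together with a shift sequence $(p_i)_{i=0}^{t-1}$. Inspecting the distance-at-most-$2$ ball of a vertex colored $1$ shows that $c$ is a valid $(2^\infty)$-coloring if and only if three conditions hold: each shift satisfies $p_i\in\{2,3,4\}$ (so that the two horizontal neighbors and the two grid-diagonal vertices in an adjacent column all avoid the color $1$); every cyclically consecutive pair satisfies $p_i+p_{i+1}\not\equiv 0\pmod 6$ (so that the two vertices lying two columns away, which share a common grid-neighbor, are colored differently); and $\sum_{i=0}^{t-1}p_i\equiv k\pmod 6$, which forces $B_0$ and $B_t$ to agree and simultaneously encodes $c(1,j)\ne c(t-1,j-k)$. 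Equivalently, the task reduces to producing a cyclic word of length $t$ over the alphabet $\{2,3,4\}$ that avoids the factors $24$, $42$, and $33$ and whose letter sum is congruent to $k$ modulo $6$.

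Such a word is easy to find once $t$ is reasonably large. Starting from the all-$2$ word (letter sum $2t$, trivially admissible), one may flip pairwise non-adjacent letters from $2$ to $3$, each flip raising the sum by $1$ while preserving admissibility, so that every residue modulo $6$ is reachable provided $t$ is at least about $10$; using the all-$4$ word together with $4\to 3$ flips extends the range of attainable sums and disposes of the remaining moderately sized $t$. For the finitely many small pairs $(k,t)$ that are left, I would give explicit six-colorings by hand, in the spirit of the small-case constructions used above for $S=(1,2^\infty)$; there one is free to use a pattern of larger period, or a second compatible pattern (which makes a shift $0$ available between the two patterns), while still employing only six colors.

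The main obstacle is exactly this last step: for certain small $t$ the constraint that the cyclic shift word avoid $24$, $42$, and $33$ clashes with the congruence $\sum p_i\equiv k\pmod 6$. For instance, for $G(3,5)$ one checks that no admissible length-$5$ word has letter sum $\equiv 3\pmod 6$, and moreover the linear $5$-coloring $(i+2j)\bmod 5$ of $\mathbb{Z}^2$ fails to respect the identification of $B_0$ with $B_t$ (which would require $t\equiv 2k$ or $2t\equiv k$ modulo $5$); here a genuinely different six-coloring is needed. Supplying such ad hoc colorings for the residual exceptional pairs, and checking the three distance conditions in each, is where the real effort lies, whereas the uniform period-$6$ construction handles all generic pairs simultaneously.
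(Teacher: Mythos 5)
Your generic construction coincides with the paper's: a single period-$6$ pattern $[1,2,3,4,5,6]$ in every column, with a shift word over $\{2,3,4\}$ that cyclically avoids the factors $24$, $42$, $33$ and has sum $\equiv k \pmod 6$, and your derivation of these conditions is correct. (One small slip: the wrap-around constraint $c(1,j)\neq c(t-1,j-k)$ is enforced by the cyclic adjacency condition on the pair $(p_{t-1},p_0)$, not by the congruence $\sum p_i\equiv k\pmod 6$; since you state the adjacency condition cyclically anyway, no harm is done.) The paper realizes all six residues for every $t\ge 6$ with the explicit words $(2^t)$, $(3,2^{t-1})$, $(3,2,3,2^{t-3})$, $(3,2,3,2,3,2^{t-5})$, $(3,4,3,2^{t-3})$, $(3,4,3,2,3,2^{t-5})$; your ``flip non-adjacent $2$s to $3$s'' scheme only covers all residues for $t\ge 10$, and the all-$4$ variant still misses, e.g., residue $0$ for $t=7$ (needed for $G(6,7)$) and residue $3$ for $t=8$ (needed for $G(3,8)$). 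Those pairs are rescued by a $3,4,3$ block, which you never introduce, so your pool of ``by hand'' cases is larger than the paper's single exception --- still finite, so this is only an inefficiency.

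The genuine gap is $G(3,5)$. You correctly prove that the period-$6$ single-pattern approach cannot work there (no admissible cyclic word of length $5$ has sum $\equiv 3\pmod 6$), but you then merely assert that ``a genuinely different six-coloring is needed'' and that supplying it ``is where the real effort lies,'' without producing one. Since the lemma claims the bound for \emph{all} coprime pairs with $3\le k<t$, and since you have just shown that the only uniform construction in sight provably fails for this pair, the existence of a $(2^6)$-coloring of $G(3,5)$ is precisely the point left in doubt; an explicit certificate is indispensable here. The paper closes this case with the period-$12$ pattern $[1,2,3,4,5,1,6,3,2,5,4,6]$ applied to every column with constant shift sequence $(3^5)$, whose validity is then checked directly. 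Until you exhibit such a coloring (and verify the corresponding ones for whatever other small pairs your recipe leaves uncovered), the proof is incomplete.
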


\begin{proof}
	We determine $(2^6)$-colorings of $ G(k,t) $ with respect to the values of $ k,t $.
	
	\begin{enumerate}
		\item Let $ t \geq 6 $.
		In this case, we present six different $(2^6)$-colorings $ c_n $, where $ n \in \{0,1,2,3,4,5\} $, which we apply with respect to $k$ and $t$.
		Each $ c_n $ is given by the periodic pattern $ [1,2,3,4,5,6] $ and a shift sequence $ \mathbf{q_n} = (p_i)_{i=0}^{t-1} $ such that:
		$$
		\begin{array}{l}
			\mathbf{q_0} = (2^t), \\
			\mathbf{q_1} = (3,2^{t-1}), \\
			\mathbf{q_2} = (3,2,3,2^{t-3}), \\
			\mathbf{q_3} = (3,2,3,2,3,2^{t-5}), \\
			\mathbf{q_4} = (3,4,3,2^{t-3}), \\
			\mathbf{q_5} = (3,4,3,2,3,2^{t-5}). \\
		\end{array}
		$$
		Since the colorings are given by the unique pattern, $ c_n $ is a $ (2^6) $-coloring of $ G(k,t) $ if and only if every two vertices with the same color are at the distance at least $3$ and $ \sum_{i=0}^{t-1} p_i \equiv k \ (\mathrm{mod} \ 6) $. 
		To verify the first condition, we observe a coloring $ c: V(\mathbb{Z}^2) \rightarrow [6] $ of the infinite grid $\mathbb{Z}^2 $ using the pattern $ [1,2,3,4,5,6] $.
		Since $ c $ is $ (2^6) $-coloring, the following must hold for each $ i,j \in \mathbb{Z} $:
		$$
		\begin{array}{l}
			c(i,j) \neq c(i+1,j), \\
			c(i,j) \neq c(i+1,j \pm 1), \\
			c(i,j) \neq c(i+2,j).
		\end{array}
		$$
		Thus, the periodic pattern $ [1,2,3,4,5,6] $ in column $ B_{i+1} $ can be shifted by $ 2, 3 $ or $ 4 $ with respect to the column $ B_i $.
		In addition, the pattern in column $ B_{i+2} $ can be shifted by $ 1, 2, 3, 4 $ or $ 5 $ with respect to the column $ B_i $.
		Thus, for the graph $ G(k,t) $ we obtain $p_i\in\{2,3,4\}$, and
		$$
		\begin{array}{ccc}
			p_i = 2 \Rightarrow p_{i+1} \neq 4, \\
			p_i = 3 \Rightarrow p_{i+1} \neq 3, \\
			p_i = 4 \Rightarrow p_{i+1} \neq 2, \\
		\end{array}
		$$
		holds for all $ i \in \{0,\dots,t-2\} $, and, furthermore,
		$$
		\begin{array}{ccc}
			p_0 = 2 \Rightarrow p_{t-1} \neq 4, \\
			p_0 = 3 \Rightarrow p_{t-1} \neq 3, \\
			p_0 = 4 \Rightarrow p_{t-1} \neq 2, \\
		\end{array}
		$$
		because $ B_0 $ and  $ B_t $ represents the same yet shifted column.
		
		Next, we use the second condition $ \sum_{i=0}^{t-1} p_i \equiv k \ (\mathrm{mod} \ 6) $ to determine which sequence $ \mathbf{q_n} $ is suitable for $ G(k,t) $ with respect to values $ k,t $.
		Let $ t \equiv \ell \ (\mathrm{mod} \ 6) $ where $ \ell \in \{0,1,2,3,4,5\} $.
		For each $ \mathbf{q_n} $ we calculate the value of $ \sum_{i=0}^{t-1} p_i $:
		$$
		\begin{array}{l}
			\mathbf{q_0}: \sum_{i=0}^{t-1} p_i = 2t \equiv 2\ell \ (\mathrm{mod} \ 6), \\
			\\
			\mathbf{q_1}: \sum_{i=0}^{t-1} p_i = 3 + 2(t-1) = 2t + 1 \equiv (2\ell+1) \ (\mathrm{mod} \ 6), \\
			\\
			\mathbf{q_2}: \sum_{i=0}^{t-1} p_i = 3 + 2 + 3 + 2(t-3) = 2t + 2 \equiv (2\ell+2) \ (\mathrm{mod} \ 6), \\
			\\
			\mathbf{q_3}: \sum_{i=0}^{t-1} p_i = 3 + 2 + 3 + 2 + 3 + 2(t-5) = 2t + 3 \equiv (2\ell+3) \ (\mathrm{mod} \ 6), \\
			\\
			\mathbf{q_4}: \sum_{i=0}^{t-1} p_i = 3 + 4 + 3 + 2(t-3) = 2t + 4 \equiv (2\ell+4) \ (\mathrm{mod} \ 6), \\
			\\
			\mathbf{q_5}: \sum_{i=0}^{t-1} p_i = 3 + 4 + 3 + 2 + 3 + 2(t-5) = 2t + 5 \equiv (2\ell+5) \ (\mathrm{mod} \ 6). \\
		\end{array}
		$$
		Thus, for $ \mathbf{q_n} $ we obtain $ \sum_{i=0}^{t-1} p_i \equiv (2\ell+n) \ (\mathrm{mod} \ 6) $.
		In order to determine which $ c_n $ gives us the $ (2^6) $-coloring of $ G(k,t) $ for fixed values $ k,t $, let $ k \equiv m \ (\mathrm{mod} \ 6) $ where $ m \in \{0,1,2,3,4,5\} $.
		From the condition $ \sum_{i=0}^{t-1} p_i \equiv k \ (\mathrm{mod} \ 6) $ we obtain:
		$$
		\begin{array}{c}
			(2\ell+n) \ (\mathrm{mod} \ 6) = m \, \Longrightarrow \,
			n = (m-2\ell) \ (\mathrm{mod} \ 6),
		\end{array}
		$$
		hence if $ k \equiv m \ (\mathrm{mod} \ 6) $ and $  t \equiv \ell \ (\mathrm{mod} \ 6) $ then the $ (2^6) $-coloring of $ G(k,t) $ is given by $ c_n $ using the sequence $ \mathbf{q_n} $ such that $ n = (m-2\ell) \ (\mathrm{mod} \ 6) $.
		
		\item Let $ t = 5 $, hence $ k \in \{3,4\} $.
		For $ G(3,5) $ we present a $ (2^6) $-coloring given by the periodic pattern $ [1,2,3,4,5,1,6,3,2,5,4,6] $ and the shift sequence $ (p_
		i)_{i=0}^4 = (3^5) $.
		The matrix below demonstrates the presented $ (2^6) $-coloring of this graph.
		$$
		\begin{tabular}{c}
			\vdots \hspace{6pt} \vdots \hspace{6pt} \vdots \hspace{6pt} \vdots \hspace{6pt} \vdots \hspace{6pt} \vdots \\
			\textbf{1} \ 5 \ 6 \ 4 \ 1 \ 5 \\
			2 \ 4 \ 3 \ 5 \ 2 \ 4 \\
			3 \ 6 \ 2 \ 1 \ 3 \ 6 \\
			4 \ \textbf{1} \ 5 \ 6 \ 4 \ 1 \\
			5 \ 2 \ 4 \ 3 \ 5 \ 2 \\
			1 \ 3 \ 6 \ 2 \ 1 \ 3 \\
			6 \ 4 \ \textbf{1} \ 5 \ 6 \ 4 \\
			3 \ 5 \ 2 \ 4 \ 3 \ 5 \\
			2 \ 1 \ 3 \ 6 \ 2 \ 1 \\
			5 \ 6 \ 4 \ \textbf{1} \ 5 \ 6 \\
			4 \ 3 \ 5 \ 2 \ 4 \ 3 \\
			6 \ 2 \ 1 \ 3 \ 6 \ 2 \\
			\vdots \hspace{6pt} \vdots \hspace{6pt} \vdots \hspace{6pt} \vdots \hspace{6pt} \vdots \hspace{6pt} \vdots \\
		\end{tabular}
		$$
		It is easy to verify that all vertices with the same color are at a sufficient distance and the condition $ \sum_{i=0}^{4} p_i \equiv 3 \ (\mathrm{mod} \ 12) $ holds.		
		
		For $ G(4,5) $ we use the coloring $ c_0 $ from case $ 1 $. %given by the periodic pattern $ [1,2,3,4,5,6] $ and shift sequence $ \mathbf{q_0} = (2^5) $.
		%It is easy to calculate that:
		%$$ 
		%\sum_{i=0}^{4} p_i = \sum_{i=0}^{4} 2 = 10 \equiv 4 \ (\mathrm{mod} \ 6),
		%$$
		%hence the condition $ \sum_{i=0}^{4} p_i \equiv k \ (\mathrm{mod} \ 6) $ holds.
		
		\item Let $ t = 4 $, hence $ k = 3 $.
		For $ G(3,4) $ we use the coloring $ c_1 $ from case $ 1 $.

	\end{enumerate}
By considering all cases of $k$ and $t$, we obtain $\chi_S(G(k,t))\le 6$, as desired.
\qed
\end{proof}

\begin{theorem}
	If $ G(k,t) $ is the distance graph, where $k,t$ are coprime positive integers such that $3 \leq k < t $, and $ S = (2^{\infty}) $, then
	$$ \chi_S(G(k,t))= \left\{
	\begin{array}{ll}
		5; \ (t \equiv 1,4 \ (\mathrm{mod} \ 5) \textit{ and } k \equiv 2,3 \ (\mathrm{mod} \ 5)) \\
		\ \ \ \textit{ or } (t \equiv 2,3 \ (\mathrm{mod} \ 5) \textit{ and } k \equiv 1,4 \ (\mathrm{mod} \ 5)),\\
		\\
		6; \ \textit{ otherwise}.
	\end{array}\right. $$
\end{theorem}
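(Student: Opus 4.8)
The plan is to combine the upper bound $\chi_S(G(k,t)) \le 6$ from Lemma~\ref{lemma1} with a matching analysis of when exactly five colours suffice. Since $G(k,t)$ is $4$-regular, a vertex together with its four neighbours spans five pairwise-distance-$\le 2$ vertices, which must receive five distinct colours in any $(2^\infty)$-coloring; hence $\chi_S(G(k,t)) \ge 5$ always. For the upper bound in the ``$5$'' case I would exhibit the explicit periodic coloring $c(x) = x \bmod 5$ on $V(G(k,t)) = \mathbb{Z}$. Two vertices receive the same colour exactly when their difference is a multiple of $5$, so $c$ is a valid $(2^\infty)$-coloring iff no difference realised by a path of length at most $2$ is divisible by $5$; these differences are the elements of $D' = \{t-k, k, 2k, t, t+k, 2t\}$. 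As $2$ is invertible modulo $5$, the requirement $5 \nmid d$ for all $d \in D'$ reduces to $k, t \not\equiv 0$ and $k \not\equiv \pm t \pmod 5$, and a short residue check shows this is precisely the ``$5$'' condition in the statement. Thus everything reduces to proving the lower bound $\chi_S(G(k,t)) \ge 6$ in the complementary (``otherwise'') case.

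For this lower bound I would argue that a hypothetical $(2^\infty)$-coloring with five colours is far too rigid to exist. The key structural observation is that, in the grid representation, $G(k,t)$ is exactly the quotient of the square lattice $\mathbb{Z}^2$ by the translation $(i,j) \mapsto (i+t, j-k)$ (this is the source of the identification $(0,j) \sim (t, j-k)$, using $\gcd(k,t)=1$). Any $(2^\infty)$-$5$-coloring of $G(k,t)$ therefore lifts to a $5$-coloring of the full lattice $\mathbb{Z}^2$ invariant under translation by $(t,-k)$. Since the shortest nonzero vector of the period lattice $\langle (t,-k)\rangle$ has $\ell_1$-length $t+k \ge 7 > 2$, distinct lattice points with the same image lie at grid-distance at least $3$, so the lifted coloring is again a genuine $2$-distance coloring, now of $\mathbb{Z}^2$.

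The heart of the argument is then a rigidity lemma: every $(2^\infty)$-$5$-coloring of $\mathbb{Z}^2$ is, up to a permutation of colours, of the form $c(i,j) = (i + 2\varepsilon j) \bmod 5$ for a fixed $\varepsilon \in \{+1,-1\}$. To prove it I would note that in such a coloring every ``cross'' $\{(i,j),(i\pm1,j),(i,j\pm1)\}$ consists of five pairwise-close vertices and hence is rainbow; consequently each colour class meets every cross in exactly one vertex, i.e.\ is a perfect code for the diamond ($\ell_1$-ball of radius $1$) neighbourhood. A direct local forcing, starting from one vertex of a colour class and propagating the ``exactly one per cross'' constraint, shows such a perfect code must be a coset of one of the two index-$5$ sublattices $\Lambda_\varepsilon = \{(i,j) : i + 2\varepsilon j \equiv 0 \pmod 5\}$; as the five classes partition $\mathbb{Z}^2$ and every coset of $\Lambda_{+1}$ meets every coset of $\Lambda_{-1}$, all five classes belong to the same $\Lambda_\varepsilon$, giving the linear form. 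Invariance of the lifted coloring under $(t,-k)$ then forces $(i+t) + 2\varepsilon(j-k) \equiv i + 2\varepsilon j \pmod 5$, that is $t \equiv 2\varepsilon k \pmod 5$; hence a five-colouring exists only when $t \equiv \pm 2k \pmod 5$, which a residue check identifies with the ``$5$'' case. In the complementary case both choices of $\varepsilon$ fail, so no five-colouring exists, and with Lemma~\ref{lemma1} we conclude $\chi_S(G(k,t)) = 6$.

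The main obstacle is the rigidity lemma: one must show that a $2$-distance $5$-coloring of the grid is essentially unique. The perfect-code reformulation makes this manageable, but classifying the diamond-neighbourhood perfect codes in $\mathbb{Z}^2$ — in particular checking that the local branching (which of two cells carries the colour at the first undetermined cross) propagates consistently to exactly the two lattices $\Lambda_{\pm1}$ — requires a careful, if elementary, forcing argument. The remaining steps are bookkeeping: verifying that the lift is a legitimate $2$-distance coloring, and confirming that $t \equiv \pm 2k \pmod 5$ matches, residue class by residue class, the list of conditions in the statement.
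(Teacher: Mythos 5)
Your proposal is correct and follows essentially the same strategy as the paper: both hinge on the rigidity lemma that every $(2^5)$-coloring of $\mathbb{Z}^2$ is, up to permutation of colors, one of the two diagonal/linear colorings (the paper proves this by the same local forcing you describe, just without the perfect-code vocabulary), and both then reduce the $5$-vs-$6$ dichotomy to the compatibility congruence $t\equiv \pm 2k \pmod 5$ coming from the identification $(0,j)\sim(t,j-k)$, invoking Lemma~\ref{lemma1} for the remaining upper bound. Your explicit construction $c(x)=x \bmod 5$ for the ``$5$'' case and the covering-space phrasing of the lift are only presentational variants of the paper's shift-sequence computation.
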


\begin{proof}
	Consider the infinite grid $ \mathbb{Z}^2 $. %with vertices given by points $ (i,j), i,j \in \mathbb{Z} $. 
	Goddard and Xu~\cite{goddard2014} proved that $ \chi_S(\mathbb{Z}^2) = 5 $,
	hence $ \chi_S(G(k,t)) \geq 5 $.
	Let $ c: V(\mathbb{Z}^2) \rightarrow [5] $ be a $ (2^5) $-coloring of $ \mathbb{Z}^2 $.
	Consider a vertex $ (x,y) \in V(\mathbb{Z}^2) $ with its neighborhood $ N((x,y)) = \{(x,y+1),(x-1,y),(x+1,y),(x,y-1)\} $.
	Each of these $ 5 $ vertices must receive a different color since they are at the distance at most $ 2 $.
	Without loss of generality, let
	$$
	\begin{array}{ccc}
		c(x,y+1) = 1, \
		c(x-1,y) = 2, \
		c(x,y) = 3, \
		c(x+1,y) = 4, \
		c(x,y-1) = 5.
	\end{array}
	$$
	Thus, the vertex $(x+1,y+1) $ can obtain either color $ 2 $ or $ 5 $ with respect to the coloring $ c $.
	We consider both options.
	\begin{enumerate}
		\item Let $ c(x+1,y+1) = 2 $.
		Due to the assignment of colors to vertices in $ N([x,y]) $ and $ (x+1,y+1) $, we derive the following chain of implications:
		$$
		\begin{array}{lll}
		c(x+1,y+1) = 2 \Rightarrow c(x+1,y-1) = 1 \Rightarrow c(x-1,y-1) = 4 \\
		\Rightarrow c(x,y-2) = 2 \Rightarrow
		c(x+1,y-2) = 3 \Rightarrow c(x-1,y-2) = 1 \\
		\Rightarrow c(x,y-3) = 4 \Rightarrow c(x+1,y-3) = 5 \Rightarrow c(x-1,y-3) = 3 \\
		\Rightarrow c(x,y-4) = 1 \Rightarrow \dots
		\end{array}
		 $$
		Thus, the coloring $ c $ forces for each column $ B_i $ the periodic pattern $ [1,3,5,2,4] $.
		Moreover, $ c(i,j) = c(i+1,j-2) $ for all $ i,j \in \mathbb{Z} $.
		\item Let $ c(x+1,y+1) = 5 $.
		Similarly:
		$$
		\begin{array}{lll}
			c(x+1,y+1) = 5 \Rightarrow c(x-1,y+1) = 4 \Rightarrow	c(x-1,y-1) = 1 \\
			\Rightarrow	c(x+1,y-1) = 2 \Rightarrow c(x,y-2) = 4 \Rightarrow c(x-1,y-2) = 3 \\
			\Rightarrow	c(x+1,y-2) = 1 \Rightarrow c(x,y-3) = 2 \Rightarrow	c(x-1,y-3) = 5 \\
			\Rightarrow	c(x+1,y-3) = 3 \Rightarrow	c(x,y-4) = 1 \Rightarrow	\dots
		\end{array}
		$$
		In this case, the coloring $ c $ forces for each column $ B_i $ the periodic pattern $ [1,3,5,4,2] $ and $ c(i,j) = c(i+1,j-3) $ for all $ i,j \in \mathbb{Z} $.
	\end{enumerate}
	We have shown that (up to permutation of colors) there exist only two $2^5$-packing colorings of the square lattice $ \mathbb{Z}^2 $.
	We now apply these two colorings to $ G(k,t) $.
	\begin{enumerate}
		\item Consider the coloring given by the periodic pattern $ [1,3,5,2,4] $ with relation $ c(i,j) = c(i+1,j-2) $.
		Hence, this coloring enforces the constant shift sequence $ (p_i)_{i=0}^{t-1} $ where $ p_i = 2 $ for all $ i \in \{0,\dots,t-1\} $.
		Note that if $ c $ is a $ (2^5) $-coloring of $ G(k,t) $, then $ \sum_{i=0}^{t-1} p_i \equiv k \ (\mathrm{mod} \ 5) $.
		What remains is to determine for which values of $ k,t $ this condition holds.
		
		Let $ t \equiv \ell \ ( \mathrm{mod} \ 5 ) $ where $ \ell \in \{ 0,1,2,3,4 \}. $
		%Thus, the number of elements of $ (p_i)_{i=0}^{t-1} $ gives the remainder $ \ell $ \textcolor{teal}{after integer division by $ 5 $.?}
		From the following sum we derive:
		$$ \sum_{i=0}^{t-1} p_i = \sum_{i=0}^{t-1} 2 = 2t \equiv 2 \ell \ (\mathrm{mod} \ 5), $$
		hence $ k \equiv 2 \ell \ (\mathrm{mod} \ 5) $.
		Note that for $ \ell=0 $ we obtain $ t,k \equiv 0 \ (\mathrm{mod} \ 5) $ which is a contradiction with the proposition $ \gcd(k,t) = 1 $ and thus it is sufficient to consider $ \ell \in \{1,2,3,4\} $.

		\item Consider the coloring given by the periodic pattern $ [1,3,5,4,2] $ with relation $ c(i,j) = c(i+1,j-3) $, which is equivalent to the constant shift sequence $ (p_i)_{i=0}^{t-1} $ where $ p_i = 3 $ for all $ i \in \{0,\dots,t-1\} $.
		We again determine for which $ k,t $ the condition $ \sum_{i=0}^{t-1} p_i \equiv k \ (\mathrm{mod} \ 5) $ holds.
		
		Let $ t \equiv \ell \ ( \mathrm{mod} \ 5 ) $ where $ \ell \in \{ 0,1,2,3,4 \}. $
		Similarly to the previous case:
		$$ \sum_{i=0}^{t-1} p_i = \sum_{i=0}^{t-1} 3 = 3t \equiv 3\ell \ (\mathrm{mod} \ 5), $$
		hence $ k \equiv 3 \ell \ (\mathrm{mod} \ 5) $.
		For $ \ell=0 $ we obtain $ t,k \equiv 0 \ (\mathrm{mod} \ 5) $, contradicting $ \gcd(k,t)=1 $ again, thus we consider $ \ell \in \{1,2,3,4\} $.
	\end{enumerate}
	By calculating the values of $ k,t $ for the given $\ell $, we obtain that $ \chi_S(G(k,t)) = 5 $ if and only if either $ (t \equiv 1,4 \ (\mathrm{mod} \ 5) $ and $ k \equiv 2,3 \ (\mathrm{mod} \ 5)) $ or $ (t \equiv 2,3 \ (\mathrm{mod} \ 5) $ and $ k \equiv 1,4 \ (\mathrm{mod} \ 5)) $.
	From Lemma \ref{lemma1} we derive $ \chi_S(G(k,t)) = 6 $ for the remaining values of $ k,t $. \qed
\end{proof}
 
\section{Concluding remarks}
\label{sec:conclude}

These results presented in this paper complement previously known results from~\cite{bbs-2019, bfk-2021, hh-2023, wal-1990} on $S$-packing colorings of connected distance graphs $G(k,t)$, which satisfy the property that each integer in a sequence $S$ belongs to $\{1,2\}$. Note that such $S$-packing colorings are classical colorings ($S=(1^\infty)$), $2$-distance colorings ($S=(2^\infty)$) and $S$-packing colorings which lie between these two. 

The amalgamation of the results presented herein with those established earlier gives us the $S$-packing chromatic numbers of all connected distance graphs $G(k,t)$, where $k \geq 1$ and $t>k$ are coprime. With respect to the sequence $S$ we summarize all of these results as follows. 
\begin{enumerate}
%\vspace{0.5cm}
\item $S=(1^\infty)$.

$ \chi(G(k,t))= \left\{
	\begin{array}{ll}
		2; \ k+t \textit{ even,}\\
		3; \ k+t \textit{ odd. }
	\end{array}\right. $
	
%\vspace{0.5cm}
\item $S=(1,1, 2^\infty)$.
	
$ \chi_S(G(k,t))= \left\{
	\begin{array}{ll}
		2; \ k+t \textit{ even},\\
		4; \ k+t \textit{ odd}.
	\end{array}\right. $

%\vspace{0.5cm}	
\item $S=(1,2^\infty)$.

$ \chi_S(G(k,t))= \left\{
	\begin{array}{ll}
		5; \ k \neq 2 \textit{ or }  t \neq 3,\\
		6; \ \textit{otherwise}.
	\end{array}\right. $

%\vspace{0.5cm}
\item $S=(2^\infty)$.	

$ \chi_2(G(k,t))= \left\{
	\begin{array}{ll}
		5; \ (t \equiv 1,4 \ (\mathrm{mod} \ 5) \textit{ and } k \equiv 2,3 \ (\mathrm{mod} \ 5)) \\
		\ \ \ \textit{ or } (t \equiv 2,3 \ (\mathrm{mod} \ 5) \textit{ and } k \equiv 1,4 \ (\mathrm{mod} \ 5)),\\
		\\
		7; \ k=2 \textit{ and } t=3,\\
		6; \ \textit{ otherwise}.
	\end{array}\right. $

As for other sequences $S$ that contain elements greater than $2$, investigations of $\chi_S(G(k,t))$ are far from complete. For the sequence $S=(d^\infty)$, $d \geq 3$, lower and upper bounds are known for $\chi_S(G(k,t))$~\cite{bfk-2021}, which in some cases culminate to exact results. For instance, if $t \geq 5$ is an odd integer and $d \geq t-3$, then $\chi_d(G(k,t))=1+t \cdot \left(d - \frac{t-3}{2}\right)$. Similarly, lower and upper bounds are known for the sequence $S=(1,2,3, \ldots)$ which corresponds to the standard packing coloring. This results focus on the distance graph $G(1,t)$~\cite{ekstein-2012,togni-2014}. Additionally, exact results exist for certain sporadic sequences $S$, which came into fruition while studying the $S$-packing chromatic numbers of $G(k,t)$ for sequences that contain only elements from $\{1,2\}$. The $S$-packing coloring of $G(1,t)$ provided in \cite{hh-2023} partitions the color classes in such a way that the vertices of some color classes are farther apart than they need to be. As a consequence, this gives results for the sequences $S$ with larger elements.

There remains ample scope for research concerning the $S$-packing chromatic number of distance graphs $G(k,t)$. In addition, for distance graphs $G(D)$, where $|D|\ge 3$, only a few results on their $S$-packing colorings are known. 

\end{enumerate}	
\section*{Acknowledgements}
B.B., J.F. and M.J. acknowledge the financial support of the Slovenian Research and Innovation Agency (research core funding No.\ P1-0297 and projects N1-0285, J1-3002, and J1-4008).
%K.K. was partially supported by the project GA20-09525S of the Czech Science Foundation. 


\begin{thebibliography}{99}

\bibitem{bbs-2019} B.~Benmedjdoub, I.~Bouchemakh, \'{E}.~Sopena, 2-distance colorings of integer distance graphs, Discuss.\ Math.\ Graph Theory 39 (2019) 589-603.

\bibitem{bfk-2021} B.~Bre\v sar, J.~Ferme, K.~Kamenick\'{a},  
$S$-packing colorings of distance graphs $G(\mathbb{Z},\{2,t\})$, Discrete Appl.\ Math.\ 298 (2021) 143--154.


\bibitem{BFKR} B.~Bre\v sar, J.~Ferme, S.~Klav\v zar and D.F.~Rall,
 A survey on packing colorings, Discuss.\ Math.\ Graph Theory 40 (2020) 923--970.

\bibitem{BKR-07} 
B.~Bre\v sar, S.~Klav\v zar, D.F.~Rall,
    On the packing chromatic number of Cartesian products, hexagonal lattice, and trees,
   Discrete Appl.\ Math.\ 155 (2007) 2303--2311.

 
\bibitem{bkrw-2017b}
  B.~Bre\v sar, S.~Klav\v zar, D.F.~Rall and K.~Wash,
 Packing chromatic number, $(1,1,2,2)$-colorings, and characterizing the Petersen graph,
  Aequationes Math.\ 91 (2017) 169--184.


\bibitem{CCH-1997} J. Chen, G. Chang, K. Huang,  Integral distance graphs, J. Graph Theory 25 (1997) 287-294.
  
\bibitem{DS-2013} 
D. Der-Fen Liu, A. Sutedja, Chromatic number of distance graphs generated by the sets $\{2,3,x,y\}$, Journal Combin. Optim. 25 (2013) 680-693. 

\bibitem{DZ-1997} 
W.A. Deuber, X. Zhu,  The chromatic numbers of distance graphs, Discrete Math. 165/166 (1997) 195-204. 
  
\bibitem{EES-1985} 
R.B. Eggleton, P. Erd\H os, D.K. Skilton,  Colouring the real line, J. Combin. Theory ser. B 39 (1985) 86-100.

\bibitem{ekstein-2012}
 J.~Ekstein, P.~Holub and B.~Lidick\'y,
 Packing chromatic number of distance graphs,
  Discrete Appl.\ Math.\ 160 (2012) 518--524.

\bibitem{ekstein-2014}
  J.~Ekstein, P.~Holub and O.~Togni,
The packing coloring of distance graphs $D(k,t)$,
  Discrete Appl.\ Math.\ 167 (2014) 100--106.
  
\bibitem{gt-2016}
  N.~Gastineau and O.~Togni,
$S$-packing colorings of cubic graphs,
  Discrete Math.\  339 (2016) 2461--2470.


\bibitem{goddard-2008}
  W.~Goddard, S.M.~Hedetniemi, S.T.~Hedetniemi, J.M.~Harris and D.F.~Rall,
 Broadcast chromatic numbers of graphs,
  Ars Combin.\  86 (2008) 33--49.
  
\bibitem{goddard-2012}
  W.~Goddard and H.~Xu,
The $S$-packing chromatic number of a graph,
  Discuss.\ Math.\ Graph Theory 32 (2012) 795--806.
  
  \bibitem{goddard2014}
  W.~Goddard and H.~Xu,
   A note on $S$-packing colorings of lattices,
  Discrete Appl.\ Math.\ 166 (2014) 255--262.
  
  
\bibitem{hh-2023} P.~Holub, J.~Hofman, 
On S-packing colourings of distance graphs $D(1,t)$ and $D(1,2,t)$, Appl.\ Math.\ Comput.\ 447 (2023) Paper No. 127855.

  
\bibitem{kramer}
  F.~Kramer and H.~Kramer,
 A survey on the distance-colouring of graphs,
  Discrete Math.\  308 (2008) 422--426.
  
  
\bibitem{liu-2020}  R.~Liu, X.~Liu, M.~Rolek and G.~Yu, Packing $(1, 1, 2, 2)$-coloring of some subcubic graphs, Discrete Appl.\ Math.\ 283 (2020) 626-630. 
  
\bibitem{togni-2014}
  O.~Togni,
  On packing colorings of distance graphs,
  Discrete Appl.\ Math.\ 167 (2014) 280--289.
  
  \bibitem{wal-1990} H.~Walther, \"{U}ber eine spezielle Klasse unendlicher Graphen, in: Graphentheorie,
K. Wagner and R. Bodendiek (Ed(s)), (Bibl. Inst., Mannheim, 1990) 2, 268--295.


\end{thebibliography}
\end{document}